\theoremstyle{plain}
\newtheorem{prop}{Proposition}[section]
\newtheorem{thm}[prop]{Theorem}
\newtheorem{cor}[prop]{Corollary}
\newtheorem{lem}[prop]{Lemma}
\theoremstyle{definition}
\newtheorem{examples}[prop]{Examples}
\newtheorem{rem}[prop]{Remark}
\newtheorem{rems}[prop]{Remarks}
\newtheorem{example}[prop]{Example}
\newtheorem{lab}[prop]{}
\newcommand{\A}{{\mathbb{A}}}
\newcommand{\C}{{\mathbb{C}}}
\newcommand{\N}{{\mathbb{N}}}
\newcommand{\R}{{\mathbb{R}}}
\newcommand{\Z}{{\mathbb{Z}}}
\newcommand{\m}{{\mathfrak{m}}}
\newcommand{\p}{{\mathfrak{p}}}
\newcommand{\q}{{\mathfrak{q}}}
\newcommand{\scrF}{{\mathscr{F}}}
\newcommand{\scrO}{{\mathscr{O}}}
\newcommand{\scrP}{{\mathscr{P}}}
\newcommand{\scrX}{{\mathscr{X}}}
\newcommand{\x}{{\mathtt{x}}}
\DeclareMathOperator{\cone}{cone}
\DeclareMathOperator{\conv}{conv}
\DeclareMathOperator{\Gal}{Gal}
\DeclareMathOperator{\Hom}{Hom}
\DeclareMathOperator{\im}{im}
\DeclareMathOperator{\ord}{ord}
\DeclareMathOperator{\Pic}{Pic}
\DeclareMathOperator{\Quot}{Quot}
\DeclareMathOperator{\Sat}{Sat}
\DeclareMathOperator{\spn}{span}
\DeclareMathOperator{\Spec}{Spec}
\DeclareMathOperator{\Sper}{Sper}
\DeclareMathOperator{\supp}{supp}
\newcommand{\Ex}{\mathrm{Ex}}
\newcommand{\interior}{\mathrm{int}}
\newcommand{\rc}{\mathrm{rc}}
\newcommand{\labelto}[1]{\overset{#1}{\longrightarrow}}
\newcommand{\wt}[1]{\widetilde{#1}}
\renewcommand{\subset}{\subseteq}
\renewcommand{\supset}{\supseteq}
\newcommand{\ol}[1]{\overline{#1}}
\newcommand{\plus}{{\scriptscriptstyle+}}
\newcommand{\all}{\forall\>}
\newcommand{\ex}{\exists\>}
\newcommand\lal{\looparrowleft}
\renewcommand{\emptyset}{\varnothing}
\renewcommand{\setminus}{\smallsetminus}
\renewcommand{\epsilon}{\varepsilon}
\newcommand{\bil}[2]{\langle{#1},{#2}\rangle}
\newcommand{\comp}{\mathbin{\scriptstyle\circ}}
\newcommand{\sa}{semi-algebraic}
\begin{document}

\title
[Semidefinite representation for convex hulls of curves]
{Semidefinite representation for convex hulls\\of real algebraic
  curves}

\author{Claus Scheiderer}
\address
  {Fachbereich Mathematik und Statistik \\
  Universit\"at Konstanz \\
  D--78457 Konstanz \\
  Germany}
\email{claus.scheiderer@uni-konstanz.de}
\urladdr{http://www.math.uni-konstanz.de/\textasciitilde scheider}

\begin{abstract}
We show that the closed convex hull of any one-dimensional \sa\
subset of $\R^n$ has a semidefinite representation, meaning that it
can be written as a linear projection of the solution set of some
linear matrix inequality.
This is proved by an application of the moment relaxation method.
Given a nonsingular affine real algebraic curve $C$ and a compact
\sa\ subset $K$ of its $\R$-points, the preordering $\scrP(K)$ of all
regular functions on $C$ that are nonnegative on $K$ is known to be
finitely generated. We prove that $\scrP(K)$ is stable, meaning that
uniform degree bounds exist for weighted sum of squares
representations of elements of $\scrP(K)$.
We also extend this last result to the case where $K$ is only
virtually compact.
The main technical tool for the proof of stability is the archimedean
local-global principle.
As a consequence of our results we prove that every convex \sa\
subset of $\R^2$ has a semidefinite representation.
\end{abstract}

\keywords
  {Convex algebraic geometry, real algebraic curves, convex hull,
  linear matrix inequalities,
  moment relaxation, semidefinite programming, Helton-Nie conjecture}

\subjclass[2010]
  {Primary
  14P05,
  secondary
  90C22} 

\maketitle


\section*{Introduction}

Let $K\subset\R^n$ be a real algebraic set, or more generally a \sa\
set. The question of how to represent the convex hull $\conv(K)$ of
$K$ has attracted growing attention in recent years. A good part of
this interest originates from optimization theory, namely from the
problem of optimizing a linear functional over $K$. One of the most
promising approaches that have been discussed is to express
$\conv(K)$ (at least up to taking closures) as a linear projection of
a spectrahedron, that is, of a set described by a linear matrix
inequality. In other words, one would like to find symmetric real
matrices $M_i$, $N_j$ of some size (for $0\le i\le n$, $1\le j\le k$
and some~$k$) such that, writing
\begin{equation}\label{afflinsyst}%
M(x,y)\>=\>M_0+\sum_{i=1}^nx_iM_i+\sum_{j=1}^ky_jN_j,
\end{equation}
the closure of $\conv(K)$ coincides with the closure of the set
\begin{equation}\label{sdpset}%
S\>=\>\{x\in\R^n\colon\>\ex y\in\R^k\ M(x,y)\succeq0\}.
\end{equation}
Here $M\succeq0$ means that the symmetric matrix $M$ is positive
semidefinite. In view of the very efficient methods available in
semidefinite programming, such a representation is perfectly well
suited for optimizing linear functionals over~$K$.

Another approach tries to understand the set $\conv(K)$ via the dual
algebraic variety of the Zariski closure of its boundary, see
\cite{rs1}, \cite{rs2}, \cite{sist} for more details.

A subset $S\subset\R^n$ is said to be \emph{semidefinitely
representable} (or to be sdp-representable, or to have a semidefinite
representation), if it can be written as in \eqref{afflinsyst} and
\eqref{sdpset} with suitable symmetric matrices $M_i,\,N_j$. The
question of characterizing sdp-representable sets
was raised by Nemirovski in his plenary address at the ICM in Madrid
\cite{nm}.
Any sdp-representable set is clearly \sa\ and convex, and for many
years no other restriction was known. In 2009, Helton and Nie
\cite{hn1} conjectured that conversely every convex \sa\ set has a
semidefinite representation. This conjecture was recently disproved
by the author \cite{sch:sdp}.
In the present paper however, we prove the existence of a
semidefinite representation for the closed convex hull of any
one-dimensional \sa\ set in $\R^n$. Using this result, we show that
every convex \sa\ subset of the plane has a semidefinite
representation, i.e.\ we show that the Helton-Nie conjecture does
hold in dimension two.

Our result does not extend to convex hulls of sets of dimension
greater than one. Indeed, for \emph{every} \sa\ set $K\subset\R^n$ of
dimension at least two, there exists a polynomial map $\varphi\colon
\R^n\to\R^N$ (for some $N\ge1$) such that the closed convex hull of
$\varphi(K)$ in $\R^N$ has no semidefinite representation. This is
proved in \cite{sch:sdp}.

For the construction of semidefinite representations we use the
moment relaxation method, introduced by Lasserre and Parrilo
(\cite{la}, \cite{pa1}, \cite{pa2}, see also \cite{la:book},
\cite{bpt}).
Computing the convex hull of a set $K\subset\R^n$ (that we assume to
be basic closed \sa) means to determine the linear moments of all
probability measures on $K$ for which these moments exist.
By considering finite-dimensional relaxations of the $K$-moment
problem,
one obtains a nested hierarchy $K(1)\supset K(2)\supset\cdots$ of
explicitly sdp-represented sets that all contain $K$. Their closures
$\ol{K(d)}=TH(d)$ have also been studied under the name \emph{theta
bodies} of~$K$
(see \cite{gpt} and \cite{bpt}, ch.~7). When $K$ is a compact \sa\
set, the sets $K(d)$ approximate $\conv(K)$ arbitrarily closely.
Moreover, the approximation becomes exact, that is, $K(d)=\conv(K)$
for some $d\ge1$, if and only if every linear polynomial that is
nonnegative on $K$ has a weighted sum of squares representation with
uniform degree bounds on the summands. See Theorem \ref{relaxsumm}
below for a rigorous formulation.

We consider a nonsingular affine algebraic curve $C$ over $\R$ and a
compact \sa\ subset $K$ of $C(\R)$, the set of real points on $C$. We
work in $\R[C]$, the affine coordinate ring of $C$. Let $\scrP(K)$ be
the saturated preordering of $K$, i.e.,
the set of all elements of $\R[C]$ that are nonnegative on $K$. It is
known \cite{sch:mz} that $\scrP(K)$ is finitely generated as a
preordering. This means that there exist finitely many elements
$1=h_0,\,h_1,\dots,h_r\in\scrP(K)$ such that every $f\in\scrP(K)$ has
a representation
\begin{equation}\label{pkrep}%
f\>=\>\sum_{i=0}^r\sum_jp_{ij}^2h_i
\end{equation}
with $p_{ij}\in\R[C]$. Fixing $C$, $K$ and the $h_i$, the main result
of this paper (Corollary \ref{mainstab}) says that there exist
uniform degree bounds for such representations. That is, every
$f\in\scrP(K)$ has some representation \eqref{pkrep} in which the
degrees of the summands are bounded above by some number that depends
only on $\deg(f)$.
(We are using degrees here to simplify the exposition, and so we
tacitly assume that $C$ is considered with a fixed embedding in some
affine space.) Technically, this result is expressed by saying that
the preordering $\scrP(K)$ is stable.
From this it follows that, for any morphism $\varphi\colon C\to\A^n$
into affine space of any dimension, the relaxation process for the
convex hull of $\varphi(K)$ in $\R^n$ becomes exact. In fact, this
latter property is equivalent to stability of $\scrP(K)$.

Our method for proving stability of $\scrP(K)$ may be of interest in
that we do not show the existence of degree bounds directly. Rather,
we establish the following equivalent fact: For any real closed field
$R$ containing $\R$, the preordering generated by the $h_i$ in
$R[C]=\R[C]\otimes R$ is again saturated (Theorem \ref{main}). This
fact, in turn, is proved by an application of the archimedean
local-global principle \cite{sch:surf}, which allows us to reduce the
problem to local rings. At first sight this may seem impossible since
the field $R$ is non-archimedean. We get around this by
working in the ring $B[C]=\R[C]\otimes B$, rather than in $R[C]$,
where $B$ is the smallest convex subring of $R$ that contains~$\R$
(so $B$ is a non-noetherian valuation ring). We believe that this way
of applying the local-global principle is novel and perhaps somewhat
unexpected.

In the case where $C$ has genus one and $K=C(\R)$ is the full real
curve (assumed to be compact), our main result was already known by
\cite{sch:conv}. In that paper, using geometric arguments of
Riemann-Roch type, we had given degree bounds of quite explicit
nature, resulting in bounds for the sizes of the derived exact
semidefinite representations.
For all curves of higher genus, as well as for genus one and $K\ne
C(\R)$, our results are new. In contrast to the method used in
\cite{sch:conv}, the techniques used in the present paper do
unfortunately not seem to give any explicit degree bounds.

From Corollary \ref{mainstab} we deduce the existence of a
semidefinite representation for the convex hull of any compact \sa\
set $S\subset\R^n$ with $\dim(S)\le1$ (Theorem \ref{sdp}). For this
one first desingularizes via normalization and then uses the moment
relaxation process. This case in turn implies the existence of such a
representation for the closed convex hull of any \sa\ set $S$ with
$\dim(S)\le1$, not necessarily compact (Theorem \ref{sdpclosed}).
From this we establish the Helton-Nie conjecture in dimension two
(Theorem \ref{hndim2}).

On the other hand, we extend the stability result to certain
noncompact cases. Namely, when $C$ is a nonsingular affine curve and
$K\subset C(\R)$ is a closed \sa\ set that is merely virtually
compact (meaning that there exists $f\in\R[C]$ that is nonconstant
and bounded on $K$),
the saturated preordering $\scrP(K)$ is still finitely generated and
stable (Theorem \ref{vcptmain}). Again, this is proved by a reduction
to the compact case.

We would like to point out that, by the results of \cite{sch:stab},
our main result on degree bounds for (weighted) sum of squares
representations does not extend to dimensions bigger than one. For
example, it was shown there for any nonsingular affine $\R$-variety
$V$ with
$V(\R)\ne\emptyset$ compact and $\dim(V)\ge2$, that degree bounds for
sums of squares in $\R[V]$ cannot exist.

For practical matters our results imply the following. Suppose we are
given a compact \sa\ set $K\subset\R^n$, $\dim(K)=1$, and a
polynomial $f\in\R[x]=\R[x_1,\dots,x_n]$, and want to find $f_*:=\min
f(K)$. For simplicity assume that $K=C(\R)$ is a real algebraic curve
without singularities (the more general case can be reduced to this
one).
For every degree $d$ consider
$$c_d\>:=\>\max\bigl\{c\in\R\colon f-c\text{ is a sos of polynomials
of }\deg\le d\text{ modulo }I_C\bigr\}$$
($I_C:={}$ideal of $C$ in $\R[x]$). Then $c_d$ is the optimum of an
explicit semidefinite program, and $c_d\uparrow f_*$ by the general
results of \cite{la}. Our results imply that we have in fact finite
convergence, i.e.\ $f_*=c_d$ for some $d\in\N$ which \emph{depends
only on $C$ and $\deg(f)$}, but not on $f$. If $C$ has genus $g\le1$,
upper bounds for $d$ are known explicitly (\cite{pa2}, \cite{he} for
$g=0$ and \cite{sch:conv} for $g=1$), but unfortunately not
otherwise.

Both for theoretical and practical reasons it would be highly
desirable to have a more constructive approach to the results of this
paper. In particular, one would like to have some information on the
nature of the degree bounds whose existence is proved here.

The paper is organized as follows. In Section~\ref{sect:relaxrappl}
we give a brief account of the relaxation method for constructing
semidefinite representations of convex hulls, in the generality that
is needed here. Section~\ref{sect:aux} contains auxiliary results for
working in the ring $\R[C]\otimes B$. This ring plays a key role in
the proof of stability of $\scrP(K)$ in the compact case
(Section~\ref{sect:main}). The existence of semidefinite
representations for compact convex hulls is deduced in
Section~\ref{sect:sdp}, and the extension to closed convex hulls of
arbitrary one-dimensional sets is discussed in
Section~\ref{sect:noncpt}. Finally, Section~\ref{sect:virtcpt}
contains the proof of stability in the virtually compact case.

This paper was originally written and submitted to a journal in 2012,
but eventually got rejected. At that time the Helton-Nie conjecture
was still open, and the results of this paper were considered as
additional support for this conjecture. The present form is a
slightly revised and updated version.
\smallskip

\textbf{Acknowledgement.}
This research was supported by DFG grant \texttt{SCHE281/10-1}. I am
grateful to Tim Netzer for his useful remarks on a preliminary 2012
version. His comments led to substantial improvements of some of the
initial results.


\section{Notations and preliminaries}

\begin{lab}
Let $k$ be a field. By an algebraic $k$-variety (or simply
$k$-variety) we mean a reduced and separated $k$-scheme of finite
type. Most algebraic varieties and schemes in this paper will be
affine. An affine $k$-variety is therefore the Zariski spectrum
$V=\Spec(A)$ of a $k$-algebra~$A$ which is finitely generated and
reduced (no nonzero nilpotent elements). Following common practice,
we also write $A=k[V]$ and call this ring the affine coordinate ring
of $V$. If $E$ is any $k$-algebra, then $V(E)=\Hom_k(A,E)$ denotes
the set of $E$-valued points of $V$. Given $\xi\in V(E)$ and $f\in
A$, we usually write $f(\xi)$ (rather than $\xi(f)$) for the result
of evaluating the homomorphism $\xi$ on~$f$.

A curve over $k$ is a $k$-variety all of whose irreducible components
have dimension one. An affine curve $C$ over $k$ is irreducible
(resp., irreducible and nonsingular) if and only if the ring $k[C]$
is an integral domain (resp., a Dedekind domain).
\end{lab}

\begin{lab}\label{sperrappl}%
We need to employ the real spectrum, and we briefly recall the basic
notions. See \cite{bcr}, \cite{pd}, \cite{ma} or \cite{sch:guide} for
full details and background. All rings are assumed to be commutative
and to have a unit. The real spectrum of the ring $A$, denoted
$\Sper(A)$, is the set consisting of all pairs $\alpha=(\p,\omega)$
where $\p\in\Spec(A)$ and $\omega$ is an ordering of the residue
field of $\p$. The prime ideal $\p$ is called the support of
$\alpha$, written $\p=\supp(\alpha)$.

For $f\in A$ and $\alpha=(\p,\omega)\in\Sper(A)$, the notation
``$f(\alpha)\ge0$'' (resp., ``$f(\alpha)>0$'') indicates that the
residue class $f$~mod~$\p$ is non-negative (resp., positive) with
respect to~$\omega$. The \emph{(Harrison) topology} on $\Sper(A)$ is
defined to have the collection of sets $U(f)=\{\alpha\in\Sper(A)
\colon f(\alpha)>0\}$, $f\in A$, as a subbasis of open sets. The
support map $\supp\colon\Sper(A)\to\Spec(A)$ is continuous. A subset
of $\Sper(A)$ is called \emph{constructible} if it is a finite
boolean combination of sets $U(f)$, $f\in A$, that is, if it can be
described by imposing sign conditions on finitely many elements of
$A$. Given $\alpha,\,\beta\in\Sper(A)$, one says that $\alpha$
specializes to $\beta$ (or that $\beta$ is a specialization of
$\alpha$) if $\beta$ lies in $\ol{\{\alpha\}}$, the closure of the
set $\{\alpha\}$. Any ring homomorphism
$\varphi\colon A\to B$ induces a continuous map $\varphi^*\colon
\Sper(B)\to\Sper(A)$ in a natural and functorial way.

A convenient alternate way to think of the real spectrum is to
observe that every point of $\Sper(A)$ is represented by a ring
homomorphism $A\to R$ into some real closed field $R$. Two
homomorphisms $A\to R_i$ ($i=1,2$) represent the same point of
$\Sper(A)$ if and only if there exists a third homomorphism $A\to R$
into a real closed field $R$ together with $A$-embeddings $R_i\to R$
($i=1,2$).
\end{lab}

\begin{lab}\label{quadmodsorit}%
Let $A$ be a ring. By $\Sigma A^2$ we denote the set of (finite)
sums of squares in $A$. A subset $M\subset A$ is called a quadratic
module of $A$ if $1\in M$, $M+M\subset M$ and $a^2M\subset M$ for
every $a\in A$ hold. If in addition $MM\subset M$ holds then $M$ is
called a preordering of $A$.

A quadratic module $M$ is finitely generated if there exist finitely
many elements $h_1,\,\dots,\,h_r\in M$ such that (putting $h_0:=1$)
$$M\>=\>(\Sigma A^2)h_0+\cdots+(\Sigma A^2)h_r\>:=\>
\Bigl\{\sum_{i=0}^rs_ih_i\colon s_0,\dots,s_r\in\Sigma A^2\Bigr\}.$$
We say in this case that the quadratic module $M$ is generated by
$h_1,\dots,h_r$.

A quadratic module $M$ of $A$ is said to be archimedean if $\Z+M=A$,
or equivalently, if for every $a\in A$ there exists a positive
integer $n$ such that $n\pm a\in M$.

Given a quadratic module $M\subset A$, one associates with $M$ the
closed subset $\scrX_M:=\{\alpha\in\Sper(A)\colon f(\alpha)\ge0$ for
every $f\in M\}$ of $\Sper(A)$. The saturation of $M$ is the
preordering $\Sat(M):=\{f\in A\colon f\ge0$ on $\scrX_M\}$ of $A$.
The quadratic module $M$ is called saturated if $M=\Sat(M)$.
Any of \cite{pd}, \cite{ma} or \cite{sch:guide} contains more
background on quadratic modules or preorderings and their
saturations.

The notion of stability for a quadratic module is basic for this
paper. It will be recalled in \ref{stablrappl}.
\end{lab}

\begin{lab}\label{tilderappl}%
Let $R$ be a real closed field, and let $V$ be an affine $R$-variety.
Given a \sa\ set $K\subset V(R)$, we denote the associated
constructible subset of $\Sper R[V]$ by $\wt K$, see \cite{bcr} 7.2.
Given any finite system of inequalities that describes $K$, the set
$\wt K$ is the subset of $\Sper R[V]$ that is described by the same
system. The saturated preordering associated with $K$ is denoted
$\scrP(K)$, that is,
$$\scrP(K)\>=\>\{f\in R[V]\colon f|_K\ge0\}.$$
\end{lab}

\begin{example}
Let $h_1,\dots,h_r\in\R[x_1,\dots,x_n]$, and consider the basic
closed set
$$K\>=\>\bigl\{\xi\in\R^n\colon h_1(\xi)\ge0,\,\dots,\,h_r(\xi)\ge0
\}$$
in $\R^n$. The quadratic module $M$ generated by $h_1,\dots,h_r$
satisfies $M\subset\scrP(K)$. In general equality does not hold,
i.e., there exist polynomials $f$ with $f|_K\ge0$ but $f\notin M$.
If $M$ is archimedean then $M$ contains every polynomial $f$ with
$f|_K>0$, by the archimedean Positivstellensatz (see \cite{pu},
\cite{pd} or \cite{ma}). Note that $M$ archimedean implies that $K$
is compact. Conversely, if $K$ is compact, and if $M$ is a
preordering, then $M$ is archimedean (Schm\"udgen Positivstellensatz,
\cite{sm}, \cite{pd} or \cite{ma}).
\end{example}

\begin{lab}\label{convrappl}%
The convex hull of a set $S\subset\R^n$ is denoted $\conv(S)$. If
$K\subset\R^n$ is a closed convex set, a point $a\in K$ is called an
extreme point of $K$ if $a=(1-t)b+tc$, where $b,\,c\in K$ and
$0<t<1$, implies $b=c=a$. The set of extreme points of $K$ is
denoted $\Ex(K)$. When $K$ is a \sa\ set, the set $\Ex(K)$ is \sa\
as well.
\end{lab}


\section{The relaxation method}\label{sect:relaxrappl}%

\begin{lab}\label{stablrappl}%
Let $A$ be a finitely generated $\R$-algebra, and let $M$ be a
finitely generated quadratic module in $A$, say $M=\Sigma_Ah_0+\cdots
+\Sigma_Ah_r$ with $1=h_0,\,h_1,\dots,h_r\in A$ and $\Sigma_A:=\Sigma
A^2$ (the cone of sums of squares in $A$). The quadratic module $M$
is said to be \emph{stable} (see \cite{ps}, \cite{sch:stab}) if,
given any finite-dimensional linear subspace $U$ of $A$, there exists
a finite-dimensional linear subspace $W$ of $A$ with
$$M\cap U\>\subset\>\Sigma_Wh_0+\cdots+\Sigma_Wh_r.$$
Here $\Sigma_W$ denotes the set of sums of squares of elements of $W$.
The property of being stable does not depend on the choice of the
generators $h_0,\dots,h_r$ of $M$. If $A$ is a polynomial ring over
$\R$, stability of $M$ means that there exists a map $\varphi\colon
\N\to\N$ such that, for every $f\in M$, there exists a representation
$f=\sum_{i,j}p_{ij}^2h_i$ with suitable polynomials $p_{ij}$ such
that $\deg(p_{ij}^2h_i)\le\varphi(\deg(f))$ for all $i,j$.
\end{lab}

\begin{lab}
By a \emph{semidefinite representation} of a set $S\subset\R^n$ one
means a representation of $S$ in the form
$$S\>=\>\Bigl\{x\in\R^n\colon\ex y\in\R^k\ M_0+\sum_{i=1}^nx_iM_i
+\sum_{j=1}^ky_jN_j\succeq0\Bigr\}$$
with suitable $k\ge0$ and real symmetric matrices $M_i,\,N_j$ of some
size.
A~set $S$ that has a semidefinite representation is also said to be
\emph{semidefinitely representable}, or \emph{sdp-representable}.
Other terms often used in the literature are projected spectrahedron,
spectrahedral shadow, or lifted LMI-representable set.
\end{lab}

We now recall the method of moment relaxation \cite{la} for
constructing semidefinite representations, in a generality adapted to
our needs. For more background we refer to Chapter~11 of
\cite{la:book}, and to Chapters 6 and~7 of \cite{bpt}. We only
outline the basic principle of the construction, ignoring possible
refinements.

\begin{lab}\label{relaxrappl}%
Let $A$ be a finitely generated reduced $\R$-algebra. We denote the
associated affine $\R$-variety by $V=\Spec(A)$, so $A=\R[V]$,
and we always equip the set $V(\R)=\Hom(A,\R)$ of real points of $V$
with its natural Euclidean topology. Fix elements $1=h_0,\,h_1,\dots,
h_r\in A$, write $\Sigma_A:=\Sigma A^2$ for the cone of sums of
squares in $A$, let
$$M\>=\>h_0\Sigma_A+\cdots+h_r\Sigma_A$$
be the quadratic module in $A$ generated by the $h_i$, and let
$$K\>=\>\{\xi\in V(\R)\colon h_1(\xi)\ge0,\dots,h_r(\xi)\ge0\}$$
be the associated basic closed \sa\ subset of $V(\R)$. We assume that
$K$ is Zariski dense in $V$. Fix a finite-dimensional linear subspace
$L\subset A$ containing~$1$, and let $1,\,x_1,\dots,x_n$ be a basis
of $L$. We consider the morphism $\varphi=\varphi_L=(x_1,\dots,x_n)$
from $V$ to affine $n$-space determined by $L$, and the induced map
$\varphi\colon V(\R)\to\R^n$.

Given a linear subspace $B\subset A$ we denote by $BB$ the linear
subspace of $A$ spanned by all products $b_1b_2$ with $b_1,\,b_2\in
B$. Fix a tuple $W=(W_0,\dots,W_r)$ of finite-dimensional linear
subspaces of $A$, and consider the linear subspace
$$U\>:=\>W_0W_0+h_1W_1W_1+\cdots+h_rW_rW_r$$
of $A$. We assume that $L$ is contained in $U$, and we denote by
$\rho\colon U'\to L'$ the restriction map between the dual linear
spaces. By $U'_1$ (resp.\ $L'_1$) we denote the set of all linear
forms $\lambda$ in $U'$ (resp.\ in $L'$) with $\lambda(1)=1$, and we
identify $\R^n$ with $L'_1$ via the map
$$L'_1\>\labelto\sim\>\R^n,\quad\lambda\>\mapsto\>\bigl(\lambda
(x_1),\dots,\lambda(x_n)\bigr).$$
For $i=0,\dots,r$ let $\Sigma_{W_i}\subset W_iW_i$ denote the cone of
sums of squares of elements of $W_i$. The set
$$M_W\>:=\Sigma_{W_0}+h_1\Sigma_{W_1}+\cdots+h_r\Sigma_{W_r}$$
is contained in $M\cap U$ and is a convex \sa\ cone in $U$. Since $K$
is Zariski dense in $V$, we have $M\cap(-M)=\{0\}$. This implies that
$M_W$ is closed in $U$ (\cite{ps} Prop.\ 2.6).
Let $M_W^*\subset U'$ be the dual cone of $M_W$. Then $M_W^*$ can
be defined by a (homogeneous) linear matrix inequality, that is,
$M_W^*$ is a spectrahedral cone in $U'$. The subset $M_W^*\cap U'_1$
of $M_W^*$ is therefore a spectrahedron as well. Its image set
$$K_W\>:=\>\rho(M_W^*\cap U'_1)\>=\>L'_1\cap\rho(M_W^*)\>\subset\>
\R^n$$
under the restriction map $\rho\colon U'_1\to L'_1=\R^n$ is therefore
an sdp-representable set by construction. For every $\xi\in K$, the
cone $M_W^*$ contains the evaluation map at $\xi$ (restricted to
$U$). Therefore $K_W$ contains the set $\varphi(K)$, and therefore we
have $\conv(\varphi(K))\subset K_W$. Increasing the subspaces
$W_0,\dots,W_r$ of $A$ results in making the set $K_W$ smaller. The
main facts are summarized in the following theorem (c.f.\ \cite{la}
Theorem~2):
\end{lab}

\begin{thm}\label{relaxsumm}%
Let $L\subset A$ be a fixed linear subspace with basis $1,x_1,\dots,
x_n$, and let $\varphi\colon V\to\A^n$ be the associated morphism.
With assumptions and notation from \ref{relaxrappl}, we have:
\begin{itemize}
\item[(a)]
$\ol{K_W}\>=\>\{\eta\in\R^n\colon\all f\in L\cap M_W$ $f(\eta)
\ge0\}$;
\item[(b)]
the inclusion $\ol{\conv(\varphi(K))}\subset\ol{K_W}$ of closed
convex sets is an equality if and only if $L\cap\scrP(K)\subset
M_W$;
\item[(c)]
if $M$ is archimedean (see \ref{quadmodsorit}) then
$\conv(\varphi(K))=\bigcap_WK_W$, intersection over all systems
$W=(W_0,\dots,W_r)$ of finite-dimensional subspaces of $A$.
\end{itemize}
\end{thm}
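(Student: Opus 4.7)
My plan is to combine the bipolar theorem for the closed convex cone $M_W\subset U$ with a Hahn-Banach description of closed convex sets in $L'_1\cong\R^n$ as intersections of closed half-spaces $\{\eta:\eta(f)\ge 0\}$ with $f\in L$. The Zariski density of $K$ will be used through the evaluation functionals $\mathrm{ev}_\xi\in M_W^*\cap U'_1$ at points $\xi\in K$. Without loss of generality I enlarge $W_0$ to contain $1$, so that $\mu(1)\ge 0$ for every $\mu\in M_W^*$.

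For (a), the inclusion $\overline{K_W}\subseteq\{\eta:\eta(f)\ge 0\ \forall f\in L\cap M_W\}$ is automatic from the definition of $K_W$, since the right-hand set is closed. For the reverse, if $\eta\in L'_1$ lies outside $\overline{K_W}$, Hahn-Banach separates $\eta$ from $\overline{K_W}$ by a closed half-space; absorbing the constant via $\eta(1)=1$ yields $f\in L$ with $\lambda(f)\ge 0$ for every $\lambda\in K_W$ but $\eta(f)<0$. Translating through $\rho$, this says $\mu(f)\ge 0$ for all $\mu\in M_W^*\cap U'_1$; I would then upgrade to $\mu(f)\ge 0$ for all $\mu\in M_W^*$ (whence $f\in M_W$ by bipolar), by rescaling when $\mu(1)>0$ and, when $\mu(1)=0$, using $\mu/\delta+\mathrm{ev}_\xi\in M_W^*\cap U'_1$ to obtain $\mu(f)+\delta f(\xi)\ge 0$ for every $\delta>0$ and then letting $\delta\to 0^+$. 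This contradicts $\eta(f)<0$.

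For (b), I first establish the analogous identity $\overline{\conv(\phi(K))}=\{\eta\in L'_1:\eta(g)\ge 0\ \forall g\in L\cap\scrP(K)\}$ by the same Hahn-Banach recipe: a half-space $\{\eta:\eta(g)\ge 0\}$ in $L'_1$ contains $\phi(K)$ iff $g\ge 0$ on $K$. Since $L\cap M_W\subseteq L\cap\scrP(K)$, the inclusion $\overline{\conv(\phi(K))}\subseteq\overline{K_W}$ is automatic, and equality amounts to the two constraint systems cutting out the same subset of $L'_1$. The direction $L\cap\scrP(K)\subseteq M_W\Rightarrow$ equality is then immediate. Conversely, given $g\in L\cap\scrP(K)$, bipolar duality inside the finite-dimensional space $L$ (applied to the closed cone $L\cap M_W$) reduces $g\in L\cap M_W$ to showing $\nu(g)\ge 0$ for every $\nu\in(L\cap M_W)^*$, which I would handle exactly as in (a) by splitting on the sign of $\nu(1)$ and, when $\nu(1)=0$, perturbing by $\mathrm{ev}_\xi$ for some $\xi\in K$.

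For (c), $M$ archimedean forces $K$ to be compact, so $\conv(\phi(K))$ is compact. Given $\eta\notin\conv(\phi(K))$, strict separation yields $g\in L$ and $c\in\R$ with $g>c$ on $K$ and $\eta(g)<c$, so $f:=g-c\cdot 1\in L$ satisfies $f>0$ on $K$ and $\eta(f)<0$. Putinar's archimedean Positivstellensatz supplies a representation $f=\sum_{i,j}p_{ij}^2h_i$ with finitely many $p_{ij}\in A$. Choosing the $W_i$ to contain all the relevant $p_{ij}$'s (and enlarging $W_0\supseteq L\cup\{1\}$ to preserve the standing hypothesis $L\subseteq U$) gives $f\in L\cap M_W$, so $\eta\notin\overline{K_W}$ by (a), hence $\eta\notin K_W$. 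Combined with the automatic inclusion $\conv(\phi(K))\subseteq K_W$ for every $W$ (from $\phi(K)\subseteq K_W$ and convexity of $K_W$), this completes the argument. The main recurring subtlety throughout is the passage from $\mu\in M_W^*\cap U'_1$ to arbitrary $\mu\in M_W^*$, handled each time by the evaluation-perturbation trick.
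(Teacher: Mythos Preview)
The paper does not supply its own proof of this theorem: the result is stated as a summary of known facts, with the attribution ``(c.f.\ \cite{la} Theorem~2)'' immediately preceding the statement, and the text moves directly to Corollary~\ref{relaxcpt}. So there is no argument in the paper to compare against, and your outline is in fact the standard duality argument one would give.

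Your reasoning is sound overall, but the clause ``without loss of generality I enlarge $W_0$ to contain $1$'' deserves a word of caution. In parts (a) and (b) the tuple $W$ is \emph{fixed}, so enlarging $W_0$ changes both $M_W$ and $K_W$ and is not a priori harmless. For (a) this is easily circumvented without the extra hypothesis: one checks that $(L\cap M_W)^*=\overline{\rho(M_W^*)}$ by a direct cone-separation argument in $L$, and then observes that any $\lambda\in L'_1\cap\overline{\rho(M_W^*)}$ is a limit of $\rho(\mu_n)$ with $\mu_n(1)\to\lambda(1)=1$, so eventually $\mu_n(1)>0$ and rescaling puts $\mu_n/\mu_n(1)$ into $M_W^*\cap U'_1$. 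Thus $\overline{K_W}=L'_1\cap(L\cap M_W)^*$ with no assumption on $W_0$. For the ``only if'' direction of (b), however, your splitting on the sign of $\nu(1)$ genuinely needs $\nu(1)\ge 0$ for all $\nu\in(L\cap M_W)^*$, which is exactly the condition $1\in M_W$. Without it, knowing that the two cones $(L\cap M_W)^*$ and $(L\cap\scrP(K))^*$ have the same slice at $\nu(1)=1$ does not force them to coincide on $\{\nu(1)<0\}$. In practice the standing convention in the moment-relaxation literature is $1\in W_0$, and this is almost certainly what the paper intends (and it is all that is needed for the applications in Sections~\ref{sect:sdp}--\ref{sect:virtcpt}); but strictly speaking it is an additional hypothesis for (b), not a WLOG reduction. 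Part (c) is unaffected, since there you are free to choose $W$.
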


If $K$ is compact then $\conv(\varphi(K))$ is again compact by
Carath\'eodory's lemma, and for any fixed tuple $W$ as above we get:

\begin{cor}\label{relaxcpt}%
If $K$ is compact, then $\conv(\varphi(K))=K_W$ holds if and only if
$L\cap\scrP(K)\subset M_W$.
\end{cor}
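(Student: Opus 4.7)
The plan is to deduce the corollary directly from part~(b) of Theorem~\ref{relaxsumm}, using compactness of $K$ to strip off the closure operations that appear there. The two structural ingredients already in place from~\ref{relaxrappl} are the sandwich $\conv(\phi(K))\subset K_W\subset\ol{K_W}$ and, from part~(b), the equivalence of the equality $\ol{\conv(\phi(K))}=\ol{K_W}$ with the inclusion $L\cap\scrP(K)\subset M_W$. The task is to bridge between these closed versions and the honest (non-closure) versions under the hypothesis that $K$ is compact.

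For the ``if'' direction I would assume $L\cap\scrP(K)\subset M_W$. Since $K$ is compact, its continuous image $\phi(K)\subset\R^n$ is compact, and by Carath\'eodory's lemma (invoked just before the statement) $\conv(\phi(K))$ is then compact as well, hence closed. Applying Theorem~\ref{relaxsumm}(b) one obtains
$$\conv(\phi(K))\>=\>\ol{\conv(\phi(K))}\>=\>\ol{K_W},$$
and sandwiching with the inclusions $\conv(\phi(K))\subset K_W\subset\ol{K_W}$ forces the equality $K_W=\conv(\phi(K))$. Conversely, if $\conv(\phi(K))=K_W$, taking closures on both sides gives $\ol{\conv(\phi(K))}=\ol{K_W}$, and Theorem~\ref{relaxsumm}(b) immediately yields $L\cap\scrP(K)\subset M_W$.

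I expect no real obstacle here: the only substantive input beyond unwinding definitions is the closedness of $\conv(\phi(K))$, which is handed to us by compactness of $K$ together with Carath\'eodory's lemma. The corollary is essentially a compact-case cleanup of Theorem~\ref{relaxsumm}(b).
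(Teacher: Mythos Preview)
Your argument is correct and matches the paper's own approach: the paper simply notes that compactness of $K$ makes $\conv(\phi(K))$ compact via Carath\'eodory's lemma, and then states the corollary as an immediate consequence of Theorem~\ref{relaxsumm}(b). Your write-up just spells out the sandwich argument that the paper leaves implicit.
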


\begin{lab}\label{relaxexact}%
The moment relaxation for the closed convex hull
$\ol{\conv(\varphi(K))}$ is said to become \emph{exact} if the
equality $\ol{\conv(\varphi(K))}=\ol{K_W}$ holds for some choice
$W=(W_0,\dots,W_r)$ of finite-dimensional subspaces. When $K$ is
compact, this is equivalent to $\conv(\varphi(K))=K_W$.

If one is aiming at describing the convex hull of $\varphi(K)$ in
$\R^n$, approximately or exactly, note that there is a two-fold
freedom of modifying the above construction. On the one hand, we may
enlarge the subspaces $W_0,\dots,W_r$. We may as well enlarge the
quadratic module $M$ by adding finitely many more generators $h_i$
from $\scrP(K)$. Both steps result in making the approximation
tighter. When the saturated preordering $\scrP(K)$ itself is finitely
generated, then choosing $M=\scrP(K)$ will give the closest
approximations for $\conv(\varphi(K))$.

When $K=V(\R)$ is a real algebraic set, and when an embedding
$V\subset\A^n$ is fixed, the closed convex sets $\ol{K_W}\subset\R^n$
resulting from taking $M=\Sigma\R[V]^2$ approximate the closed convex
hull $\ol{\conv(V(\R))}$. Under the name \emph{theta bodies} of $V$
they have been studied by Gouveia, Parrilo, Thomas and others (see
\cite{gpt} and \cite{bpt}, Chapter~7).

Varying the embedding $\varphi$, we see:
\end{lab}

\begin{cor}\label{allembiffstabl}%
Let $V$ be an affine $\R$-variety, let $K\subset V(\R)$ be a basic
closed set, Zariski dense in $V$, and assume that the saturated
preordering $\scrP(K)$ in $\R[V]$ is finitely generated. Then the
following two conditions are equivalent:
\begin{itemize}
\item[(i)]
For any $n\in\N$ and any morphism $\varphi\colon V\to\A^n$ of
$\R$-varieties, the moment relaxation for the closed convex hull
$\ol{\conv(\varphi(K))}$ becomes exact (\ref{relaxexact});
\item[(ii)]
the preordering $\scrP(K)$ in $\R[V]$ is stable (\ref{stablrappl}).
\end{itemize}
\end{cor}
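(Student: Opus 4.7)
The plan is to recognize that both conditions are, via Theorem \ref{relaxsumm}(b), equivalent formulations of the same inclusion $L\cap\scrP(K)\subset M_W$. So the corollary is essentially a dictionary translation once one is careful about the bookkeeping between linear subspaces $L$ and $U$ appearing in the relaxation framework of \ref{relaxrappl} and the finite-dimensional test subspaces appearing in the definition of stability. Write $\scrP(K)=\Sigma_Ah_0+\cdots+\Sigma_Ah_r$ by taking the products of subsets of a finite set of preordering generators as quadratic-module generators; with these $h_i$ fixed, stability is tested against arbitrary finite-dimensional subspaces of $A$.

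For (ii)$\To$(i), fix a morphism $\phi=(x_1,\dots,x_n)\colon V\to\A^n$ and set $L=\R\cdot 1+\R x_1+\cdots+\R x_n$. Apply stability of $\scrP(K)$ to the subspace $U_0=L$: this produces finite-dimensional $W_0,\dots,W_r\subset A$ with $\scrP(K)\cap L\subset\Sigma_{W_0}h_0+\cdots+\Sigma_{W_r}h_r=M_W$. Enlarge $W_0$ if necessary to contain $L$ (in particular $1$); then $L=1\cdot L\subset W_0W_0$, so $L\subset U:=W_0W_0+\sum_{i\ge 1}h_iW_iW_i$ and the framework of \ref{relaxrappl} applies. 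Theorem \ref{relaxsumm}(b) now yields $\ol{\conv(\phi(K))}=\ol{K_W}$, which is exactness of the moment relaxation.

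For (i)$\To$(ii), start with any finite-dimensional subspace $U_0\subset A$; enlarging slightly, assume $1\in U_0$ and pick a basis $1,x_1,\dots,x_n$ of $U_0$. Take $\phi=(x_1,\dots,x_n)\colon V\to\A^n$ and set $L=U_0$. By hypothesis (i), the moment relaxation for $\ol{\conv(\phi(K))}$ becomes exact for some tuple $W=(W_0,\dots,W_r)$ (with the same fixed generators $h_i$). Theorem \ref{relaxsumm}(b) then gives $U_0\cap\scrP(K)=L\cap\scrP(K)\subset M_W=\sum_{i=0}^r\Sigma_{W_i}h_i$, and this is precisely the stability condition for $U_0$.

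There is no real obstacle; the only point requiring attention is the compatibility condition $L\subset U$ in the setup of \ref{relaxrappl}, which is achieved in the forward direction by absorbing $L$ into $W_0$, and holds automatically in the reverse direction since we pull $L=U_0$ straight from the stability hypothesis.
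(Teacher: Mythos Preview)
Your proof is correct and follows the same approach as the paper's: both directions are read off directly from Theorem~\ref{relaxsumm}(b), with the observation that stability of $\scrP(K)$ is precisely the statement that for every finite-dimensional $L$ one can find $W$ with $L\cap\scrP(K)\subset M_W$. The paper's proof is simply a one-sentence version of yours; your additional care about the inclusion $L\subset U$ (by absorbing $L$ into $W_0$) fills in bookkeeping that the paper leaves implicit.
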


\begin{proof}
After fixing a finite description $\scrP(K)=h_0\Sigma+\cdots+h_r
\Sigma$ (with $\Sigma=\Sigma\R[V]^2$), stability of $\scrP(K)$ means
that, for every finite-dimensional subspace $L\subset\R[V]$
containing~$1$, there exists a tuple $W=(W_0,\dots,W_r)$ of
finite-dimensional subspaces such that $L\cap\scrP(K)\subset M_W$. By
\eqref{relaxsumm}(b), it is equivalent that $\ol{\conv(\varphi(K))}=
\ol{K_W}$, where $\varphi$ is the morphism associated with $L$.
Having $L$ range over all finite-dimensional subspaces means to have
$\varphi$ range over all morphisms from $V$ to affine space of
arbitrary dimension.
Therefore, (i) and (ii) are equivalent.
\end{proof}


\section{Auxiliary results}\label{sect:aux}%

Let $C$ be a nonsingular curve over $\R$. Here we collect results
that are needed for working in the base extension of $C$ to a real
closed valuation ring $B\supset\R$. The situation has some
resemblance to arithmetic surfaces. The main result that will be
needed in the next section is Proposition \ref{nstordspez}.

\begin{lab}\label{wsorit}%
The following setup will be fixed for the entire section. Let $R$ be
a real closed field containing $\R$, the field of real numbers. The
unique ordering of $R$ is denoted $\le$. Let
$$B\>:=\>\bigl\{b\in R\colon\ex n\in\N\ -n<b<n\bigr\}$$
be the convex hull of $\R$ in $R$. Then $B$ is a valuation ring with
quotient field $R$, and we denote by $v\colon R\to\Gamma\cup
\{\infty\}$ the associated Krull valuation. The maximal ideal of $B$
will be denoted by $\m$. The residue field is $B/\m=\R$.

Let $A$ be a finitely generated $\R$-algebra, and write $A_B=A\otimes
B$ and $A_R=A\otimes R$ (with $\otimes:=\otimes_\R$ always). Given
$0\ne f\in A_R$, we can write $f=\sum_{i=1}^ra_i\otimes b_i$ with
$a_i\in A$ and $b_i\in R$ in such a way that $a_1,\dots,a_r$ are
linearly independent over $\R$. Putting
$$w(f)\>:=\>\min\{v(b_i)\colon i=1,\dots,r\}$$
and $w(0):=\infty$ gives a well-defined map $w\colon A_R\to\Gamma\cup
\{\infty\}$ that extends the valuation~$v$. (To see that $w$ is
well-defined, let $f=\sum_{j=1}^sa'_j\otimes b'_j$ be a second
representation with $a'_1,\dots,a'_s$ $\R$-linearly independent. Then
$b_1,\dots,b_r$ and $b'_1,\dots,b'_s$ span the same $\R$-linear
subspace of $R$,
so we can write $b'_j=\sum_ic_{ij}b_i$ with $c_{ij}\in\R$. It follows
that $\min_jv(b'_j)\ge\min_iv(b_i)$. By symmetry, the opposite
inequality holds as well.)
For $f,\,g\in A_R$, it is easy to see that
$w(f+g)\ge\min\{w(f),\,w(g)\}$
and
$w(fg)\>\ge\>w(f)+w(g)$ hold.
For $b\in R$ we moreover have $w(bf)=w(f)+v(b)$.

The residue map $B\to B/\m=\R$ will be denoted by either $b\mapsto
\pi(b)$ or $b\mapsto\ol b$. Accordingly we often denote the induced
homomorphism $A_B\to A$ by $f\mapsto\ol f$. We have $A_B=\{f\in
A_R\colon w(f)\ge0\}$, and for $f\in A_B$ we have $\ol f=0$ iff
$w(f)>0$.
\end{lab}

\begin{lem}\label{geomintbew}%
Assume that the $\R$-algebra $A$ is an integral domain. Then $w(fg)=
w(f)+w(g)$ holds for all $f$, $g\in A_R$, and so $w$ extends to a
valuation of $\Quot(A_R)$, the field of fractions of $A_R$.
\end{lem}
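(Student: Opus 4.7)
The plan is to establish the reverse inequality to the already-noted $w(fg)\ge w(f)+w(g)$ by reducing to a statement about the residue ring $A=A_B/\m A_B$ and then invoking that $A$ is a domain. The central observation is a characterization of the ``unit level set'' of $w$: for $f\in A_B\setminus\{0\}$ one has $w(f)=0$ if and only if $\ol f\ne 0$ in $A$. Indeed, writing $f=\sum_{i=1}^r a_i\otimes b_i$ with $a_1,\dots,a_r\in A$ linearly independent over $\R$ and $b_i\in B$, one has $\ol f=\sum_i \ol{b_i}\,a_i$, so $\R$-linear independence of the $a_i$ forces $\ol f=0$ to be equivalent to every $b_i$ lying in $\m$, i.e.\ to $w(f)>0$.

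With this in hand, for nonzero $f,g\in A_R$ I would set $\alpha:=w(f)$ and $\beta:=w(g)$, and exploit the fact that $v$ surjects onto its value group $\Gamma$ to pick $s,t\in R^\times$ with $v(s)=-\alpha$ and $v(t)=-\beta$. The formula $w(bh)=w(h)+v(b)$ for $b\in R$ then gives $w(sf)=w(tg)=0$, so $sf,tg\in A_B$ and both have nonzero residues in $A$ by the preceding paragraph. Because $A$ is an integral domain, $\ol{sf}\cdot\ol{tg}=\ol{st\cdot fg}$ is nonzero in $A$, whence $w(st\cdot fg)=0$ and therefore $w(fg)=-v(st)=\alpha+\beta=w(f)+w(g)$, as desired.

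Finally, multiplicativity immediately implies $A_R$ is a domain (the product of two nonzero elements has finite $w$-value, hence is nonzero), so its field of fractions exists. Setting $w(f/g):=w(f)-w(g)$ is well-defined by multiplicativity, and the valuation axioms transfer from $A_R$ to $\mathrm{Frac}(A_R)$ in the standard way. The only nontrivial ingredient in the whole argument is the residue characterization in the first paragraph; once that is in hand, the scaling trick together with integrality of $A$ finishes the proof, and I do not anticipate any serious obstacle.
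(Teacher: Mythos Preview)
Your proof is correct and follows essentially the same route as the paper: scale $f$ and $g$ by elements of $R^\times$ to reduce to the case $w(f)=w(g)=0$, then use that $A$ is a domain to conclude $\ol f\cdot\ol g\ne 0$ and hence $w(fg)=0$. The residue characterization you spell out in your first paragraph is exactly the fact ``$\ol f=0\iff w(f)>0$'' already recorded in \ref{wsorit}, so that part is a (harmless) re-derivation rather than a new ingredient.
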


Clearly, the residue field of the valuation $w$ of $\Quot(A_R)$ is
$\Quot(A)$.

\begin{proof}
Since $A$ is a domain, and since $\R$ is relatively algebraically
closed in $R$, the tensor product $A_R$ is a domain, too.
We can write $f=af_0$ and $g=bg_0$ with $a,\,b\in R$ where
$f_0,\,g_0\in A_B$ satisfy $w(f_0)=w(g_0)=0$. So we can assume
$w(f)=w(g)=0$, which means $\ol f,\,\ol g\ne0$ in $A$. Since $A$ is a
domain we have $\ol f\cdot\ol g\ne0$, which implies $w(fg)=0$. The
lemma is proved.
\end{proof}

\begin{lab}\label{krprep}%
Let $A$ be a finitely generated reduced $\R$-algebra,
as before, and write $V=\Spec(A)$ for the affine $\R$-variety
associated with $A$. We need to work with the real spectrum of
$A_B=A\otimes B$. As a set, $\Sper(A_B)$ can be identified with the
disjoint union of the real spectra of the rings $A\otimes R(\q)$,
where $\q$ is a prime ideal of $B$ and $R(\q)$ denotes the residue
field of $\q$  (a~real closed field extension of~$\R$). Given any
point $\xi\in V(\C)=\Hom_\R(A,\C)$, we consider the homomorphism
$$\xi\otimes\pi\colon A\otimes B\to\C,\quad a\otimes b\>\mapsto\>
a(\xi)\ol b$$
and denote its kernel by $M_\xi$.
So
$$M_\xi\>:=\>\Bigl\{\sum_ia_i\otimes b_i\in A\otimes B\colon\>\sum_i
a_i(\xi)\ol b_i=0\text{ in }\C\Bigr\}.$$
Clearly, $M_\xi$ is a maximal ideal of $A\otimes B$ whose residue
field is the residue field of $\xi$ (hence $\R$ or~$\C$).
When $\xi$ is real, i.e.\ $\xi\in V(\R)$, there is a unique point in
$\Sper(A\otimes B)$ whose support is $M_\xi$. This point will be
denoted $\alpha_\xi$.
Conversely, any point $\alpha\in\Sper(A\otimes B)$ with residue field
$\R$ has this form:
\end{lab}

\begin{lem}\label{chactalphaxi}%
Given $\alpha\in\Sper(A\otimes B)$, there exists $\xi\in V(\R)$ with
$\alpha=\alpha_\xi$ if and only if $(A\otimes B)/\supp(\alpha)=\R$.
\qed
\end{lem}

\begin{lab}\label{kprep2}%
We fix a \sa\ subset $K$ of $V(\R)$ and denote by $\wt K$ the
constructible subset of $\Sper(A)=\Sper\R[V]$ corresponding to~$K$,
see \ref{tilderappl}. The natural homomorphism $i\colon A\to A_B$
induces a continuous map $i^*\colon\Sper(A_B)\to\Sper(A)$ of the
real spectra (see \ref{sperrappl}), and we write $X_K:=(i^*)^{-1}
(\wt K)$.
So $X_K$ is a constructible subset of $\Sper(A_B)$, which is closed
in $\Sper(A_B)$ if $K$ is a closed subset of $V(\R)$. By $K_R$ we
denote the base field extension of $K$ to $R$ (see \cite{bcr} 5.1).
So $K_R$ is the \sa\ subset of $V(R)$ that is defined by the same
finite system of inequalities as $K$ (this does not depend on the
choice of such a system). Considering $V(R)$ as a subset of
$\Sper(A_B)$ in the natural way, we have $K_R=V(R)\cap X_K$ (c.f.\
\ref{tilderappl}).
\end{lab}

Recall that a closed point of a topological space $T$ is a point
$x\in T$ for which the singleton set $\{x\}$ is closed in~$T$.

\begin{prop}\label{remaxidrelat}%
Assume that the \sa\ set $K\subset V(\R)$ is compact. Then the closed
points of $X_K$ are precisely the points $\alpha_\xi$, for $\xi\in K$
(see~\ref{krprep}).
\end{prop}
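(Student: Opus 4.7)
The plan is to prove the two inclusions separately. First, for $\xi\in K$, the residue field of $M_\xi$ is $\R$, which is archimedean and carries a unique ordering; since no proper convex subring of $\R$ contains $\R$, the point $\alpha_\xi$ admits no proper real-spectral specializations and so is closed in $\Sper(A_B)$. The composition $A\to A_B\to A_B/M_\xi=\R$ is evaluation at $\xi$, so $i^*(\alpha_\xi)\in\wt K$ and thus $\alpha_\xi\in X_K$.

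For the converse, I would take a closed point $\alpha$ of $X_K$ and represent it by an order-compatible $\R$-algebra homomorphism $\phi\colon A_B\to F$, with $F$ the real closure of the residue field at $\supp(\alpha)$ under the induced ordering. Let $B_F\subset F$ denote the convex hull of $\R$. The key use of compactness is that each $a\in A$ is bounded on $K$: one can choose $r\in\R_{>0}$ with $r\pm a\in\scrP(K)$. Since $i^*(\alpha)\in\wt K$, it follows that $|\phi(a)|\le r$ in $F$, so $\phi(a)\in B_F$. Elements of $B$ are already bounded by reals, and that property is preserved by $\phi$ (nonnegative elements of $B$ are even squares in $B$, because $B$ is closed under positive square roots), so $\phi(B)\subset B_F$. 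Together these give $\phi(A_B)\subset B_F$.

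If $B_F\subsetneq F$, the place $F\to B_F/\m_F\cup\{\infty\}$ would induce a proper specialization of $\alpha$, contradicting closedness; hence $B_F=F$, i.e.\ $F$ is archimedean over $\R$, forcing $F=\R$. Then $\xi:=\phi|_A$ lies in $V(\R)$, and $\phi|_B\colon B\to\R$ is an order-preserving $\R$-algebra homomorphism which must annihilate the infinitesimal ideal $\m$ and hence coincide with the residue map $b\mapsto\ol b$. Therefore $\ker(\phi)=M_\xi$, and uniqueness of the ordering on $\R$ yields $\alpha=\alpha_\xi$, with $\xi\in K$ since $i^*(\alpha_\xi)=\xi\in\wt K$. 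The main obstacle is the middle step---deducing $F=\R$ from closedness of $\alpha$---which relies crucially on the compactness-driven bound $\phi(A_B)\subset B_F$ together with the correspondence between proper convex subrings of $F$ and proper specializations in $\Sper(A_B)$.
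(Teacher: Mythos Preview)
Your proof is correct and follows essentially the same route as the paper's: both use compactness of $K$ to force $\phi(A_B)$ into the convex hull of $\R$ in the target real closed field, and then pass to the residue field to identify the point $\alpha_\xi$. The paper phrases this as constructing, for an arbitrary $\alpha\in X_K$, a specialization $\beta=\alpha_\xi$ via the residue map (so that a closed $\alpha$ must coincide with $\beta$), whereas you invoke closedness earlier to deduce $B_F=F$ and hence $F=\R$; the substance is the same.
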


\begin{proof}
For $\xi\in K$ we have $\alpha_\xi\in X_K$ by construction, and this
is a closed point of $\Sper(A_B)$ since $\supp(\alpha_\xi)=M_\xi$ is
a maximal ideal of $A_B$.
Conversely, let $\alpha\in X_K$ be a closed point of $X_K$, and let
$\phi\colon A\otimes B\to S$ be a homomorphism that represents
$\alpha$, where $S$ is a real closed field
(c.f.\ \ref{sperrappl}). Let $C\subset S$ be the convex hull of
$\R$ in $S$, so we have $C/\m_C=\R$. We claim that $\im(\phi)\subset
C$ holds. Indeed, let $a\in A$ and $b\in B$. Since $K$ is compact
there is $c\in\R$ with $|a|<c$ on $K$, and it follows that
$|\phi(a\otimes1)|<c$ in $S$.
On the other hand, there is a real number $c'>0$ such that $|b|<c'$
holds on $\Sper(B)$, for example $c'=1+|\ol b|$. So we get $|\phi
(a\otimes b)|<cc'$ in $S$, whence $\phi(a\otimes b)\in C$. Now since
$\im(\phi)\subset C$, we can compose $\phi\colon A\otimes B\to C$
with the residue homomorphism $C\to\R$, resulting in a homomorphism
$\psi\colon A\otimes B\to\R$. By construction, the point $\beta\in
\Sper(A\otimes B)$ represented by $\psi$ is a specialization of
$\alpha$. Since $K$ is closed in $V(\R)$ we have $\beta\in X_K$,
and so $\beta=\alpha$,
which proves the claim by Lemma \ref{chactalphaxi}.
\end{proof}

\begin{lem}\label{psdinox}%
Let $K\subset V(\R)$ be a \sa\ set, and let $f\in A_B$. Then $f$ is
nonnegative on the constructible subset $X_K$ of $\Sper(A_B)$ if, and
only if, $f$ is nonnegative on $K_R\subset V(R)$.
\end{lem}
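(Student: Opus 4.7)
The direction ($\Rightarrow$) is immediate: the natural embedding $V(R)\hookrightarrow\Sper(A_B)$, which sends $\xi\colon A\to R$ to its extension $A_B\to R$ via the inclusion $B\hookrightarrow R$, identifies $K_R$ with a subset of $X_K$, so nonnegativity on $X_K$ restricts to nonnegativity on $K_R$.

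For the direction ($\Leftarrow$) I argue by contradiction. Suppose $\alpha\in X_K$ has $f(\alpha)<0$, represented by $\phi\colon A_B\to S$ with $S$ real closed, and write $f=\sum_k F_k\otimes b_k$ with $F_k\in A$ and $b_k\in B$. A preliminary observation is that $\phi|_B$ is automatically order-preserving: any $b\in B$ with $b>0$ satisfies $b\le c$ for some $c\in\R$, hence $\sqrt b\le\sqrt c\in\R\subset B$, so $b$ is a square in $B$ and $\phi(b)=\phi(\sqrt b)^2\ge 0$. Consider first the case that $\phi|_B$ is injective. Then $B$ embeds in $S$ as an ordered subring, and $\phi|_B$ extends uniquely to $R=\Quot(B)\to S$, upgrading $\phi$ to $\tilde\phi\colon A_R\to S$. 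The resulting point in $\Sper(A_R)$ satisfies $g_j\ge 0$ and $f<0$, and since $A_R$ is a finitely generated $R$-algebra, model completeness of real closed fields applied to the existential formula $\exists y\in V(R),\ \bigwedge_j g_j(y)\ge 0\land\sum_k F_k(y)b_k<0$ (with parameters $b_k\in R$) transfers it from $S$ to $R$, producing $\xi\in V(R)$ with $g_j(\xi)\ge 0$ and $f(\xi)<0$; thus $\xi\in K_R$ contradicts the hypothesis.

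The main obstacle is the case that $\phi|_B$ has nontrivial kernel $\p$. My plan here is to set $\bar B=B/\p$ and factor $\phi$ as $\bar\phi\colon A_{\bar B}\to S$, where now $\bar\phi|_{\bar B}$ is injective. Applying the previous case's argument to the pair $(\bar B,\kappa(\p))$ in place of $(B,R)$---$\kappa(\p)=\Quot(\bar B)$ being the residue field of the convex valuation ring $B_\p$ of $R$, itself real closed---produces $\bar\xi\in V(\kappa(\p))$ with $g_j(\bar\xi)\ge 0$ and $\sum_k F_k(\bar\xi)\bar b_k<0$ in $\kappa(\p)$. To lift $\bar\xi$ to $V(R)$ I would invoke the structural fact that every convex valuation ring of a real closed field admits a coefficient-field section $s\colon\kappa(\p)\hookrightarrow B_\p\subset R$ of the residue map, and set $\xi=s(\bar\xi)\in V(R)$. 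Then $g_j(\xi)=s(g_j(\bar\xi))\ge 0$ gives $\xi\in K_R$, and writing $b_k=s(\bar b_k)+\epsilon_k$ with $\epsilon_k\in\m_{B_\p}$ yields $f(\xi)=s(\bar f(\bar\xi))+\sum_k s(F_k(\bar\xi))\epsilon_k$, where the first summand is a nonzero element of the coefficient subfield (hence a unit of $B_\p$) of negative sign, while the second lies in $\m_{B_\p}$; hence $f(\xi)<0$ in $R$, the desired contradiction. The principal difficulties are thus the existence of the coefficient-field section and the sign-preservation under infinitesimal perturbation.
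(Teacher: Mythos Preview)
Your proof is correct and rests on the same key structural fact as the paper's: the residue map $B_\p\to\kappa(\p)$ of the convex valuation ring $B_\p\subset R$ admits a coefficient-field section $s\colon\kappa(\p)\hookrightarrow B_\p$. (One should add, as is standard, that $s$ can be chosen $\R$-linear so that $s\circ\bar\xi$ is an $\R$-algebra homomorphism and hence a genuine point of $V(R)$.) Your sign-preservation step under infinitesimal perturbation is exactly right: $f(\xi)=s(\bar f(\bar\xi))+m$ with $m\in\m_{B_\p}$ and $s(\bar f(\bar\xi))$ a negative unit of $B_\p$, so $f(\xi)<0$ because the residue map on $B_\p$ is order-compatible.

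The paper's execution is organized differently. Rather than descending to $\kappa(\p)$ via model completeness and then lifting a concrete point to $V(R)$ through $s$, the paper uses $s$ to embed $R$ into a real closed field $E\supset\kappa_W$ (where $W=B_\p$), and then builds a point $\beta\in\wt{K_R}\subset\Sper(A_R)$ whose restriction $\beta_0\in\Sper(A_B)$ \emph{generizes} $\alpha$. Nonnegativity of $f$ on $\wt{K_R}$ (Tarski) gives $f(\beta_0)\ge0$, and specialization pushes this down to $f(\alpha)\ge0$. So both arguments combine Tarski transfer with the section $s$, but in opposite orders: you apply transfer first (down to $\kappa(\p)$) and then lift via $s$, while the paper applies $s$ first to manufacture the generization and then invokes transfer at the level of $\wt{K_R}$. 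Your infinitesimal computation is essentially the hands-on verification of what the paper packages as ``$\beta_0$ specializes to $\alpha$''. Each approach has its appeal: yours produces an honest witness $\xi\in K_R$ with $f(\xi)<0$, while the paper's real-spectrum formulation avoids the explicit perturbation estimate.
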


\begin{proof}
Let $\q$ be a prime ideal of $B$. The quotient field $R(\q)$ of
$B/\q$ is real closed. Let $\pi_\q(f)\in A\otimes R(\q)=A_{R(\q)}$ be
the coefficient-wise reduction of $f$ modulo~$\q$. On the other hand,
let $K_{R(\q)}\subset V(R(\q))$ be the base field extension of $K$
from $\R$ to~$R(\q)$. Then $f\ge0$ on $X_K$ is equivalent to
$\pi_\q(f)\ge0$ on $K_{R(\q)}$ for every prime ideal $\q$ of~$B$.
Thus we have to show: If $f\ge0$ on $K_R\subset V(R)$, then
$\pi_\q(f)\ge0$ on $K_{R(\q)}$, for every prime ideal $\q$ of~$B$. To
see this, recall that the residue map $B_\q\to R(\q)$ has a
homomorphic section~$s$. Thus if $\eta\in K_{R(\q)}$ is a given
homomorphism $\eta\colon A\to R(\q)$, then $\xi:=s\comp\eta$,
considered as a homomorphism $A\to B_\q\subset R$, is a point in
$K_R$. Since $f\ge0$ at~$\xi$, it follows that $\pi_\q(f)\ge0$ at
$\eta$.
\end{proof}

\begin{lab}\label{spezreeldisk}%
Now we specialize to the case where $C$ is an irreducible affine
curve over $\R$, and $A=\R[C]$ is the affine coordinate ring of $C$.
We keep fixed the extension $\R\subset R$ of real closed fields and
the convex hull $B$ of $\R$ in $R$, and we'll write $R[C]:=A\otimes
R$ and $B[C]:=A\otimes B$. The following technical lemma is specific
to the curves case.
\end{lab}

\begin{lem}\label{spezreelrelat}%
Let $C$ be an irreducible affine curve over $\R$, and let $K\subset
C(\R)$ be a compact \sa\ set. Let $M$ be a maximal ideal of $\R[C]
\otimes B=B[C]$, and assume that there exists $\beta\in X_K$ with
$\supp(\beta)\subset M$ and with $\supp(\beta)\not\subset
\R[C]\otimes\m$. Then $M=M_\xi$ for some $\xi\in K$.
\end{lem}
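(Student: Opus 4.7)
The plan is to apply Proposition~\ref{remaxidrelat} and then pin down $M$ by reducing $\supp(\alpha)$ modulo $\m B[C]$. Since $K$ is compact, $\wt K$ is closed in $\Sper(\R[C])$, so $X_K$ is closed in $\Sper(B[C])$; hence $\alpha$ specializes to a closed point of $X_K$, which by \ref{remaxidrelat} must be of the form $\alpha_\eta$ for some $\eta\in K$. This yields $\supp(\alpha)\subset M_\eta$, and it suffices to prove $M=M_\eta$. Denote by $Q\subset\R[C]$ the image of $\supp(\alpha)$ under the residue map $B[C]\to\R[C]=B[C]/\m B[C]$; the hypothesis says $Q\neq0$, and $Q\subset\m_\eta:=M_\eta/\m B[C]$.

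The argument splits according to $\supp(\alpha)\cap B\in\{0,\m\}$. In the \emph{vertical case} $\m\subset\supp(\alpha)$, the image $Q=\supp(\alpha)/\m B[C]$ is a nonzero prime of the one-dimensional Noetherian domain $\R[C]$, hence maximal; thus $\supp(\alpha)=M_\xi$ for some $\xi\in C(\C)$, forced real by formal reality of $\alpha$. Then $M=\supp(\alpha)=M_\xi$, and $\alpha=\alpha_\xi\in X_K$ gives $\xi\in K$.

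In the \emph{horizontal case} $\supp(\alpha)\cap B=0$, a representing homomorphism $\phi\colon B[C]\to S$ of $\alpha$ factors through $R[C]$ with kernel a maximal ideal $\m_{\xi'}$ of $R[C]$, so $\supp(\alpha)=\m_{\xi'}\cap B[C]$ with $\xi'\in C(R)$ real (again by formal reality). Compactness of $K$ enters crucially: fixing an embedding $C\subset\A^n$, compactness yields a bound $|x_i|\leq N$ on $K$ for some $N\in\R$, so $\alpha\in X_K$ forces $|a_i|\leq N$ in $R$, where $a_i=\phi(x_i)$ is the $i$th coordinate of $\xi'$; hence $a_i\in B$ and $\xi'\in C(B)$. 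The generators $x_i-a_i$ lie in $B[C]\cap\supp(\alpha)$ and reduce to generators of $\m_{\ol{\xi'}}$ at the reduction $\ol{\xi'}\in C(\R)$, giving $Q\supset\m_{\ol{\xi'}}$; combined with $Q\subset\m_\eta$ and maximality, $Q=\m_{\ol{\xi'}}=\m_\eta$, so $\ol{\xi'}=\eta$.

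To conclude $M=M_\eta$, it remains to show $\m\subset M$ (automatic in the vertical case). If instead $M\cap B=0$ in the horizontal case, localizing $M$ at $B\setminus\{0\}$ gives a maximal ideal $\m_{\xi''}$ of $R[C]$ with $M=\m_{\xi''}\cap B[C]$; maximality of $M$ in $B[C]$ forces some coordinate of $\xi''$ to lie outside $B$ (otherwise $B[C]/M$ would be $B$, not a field). But $\supp(\alpha)\subset M$ localizes to $\m_{\xi'}\subset\m_{\xi''}$, hence $\xi'=\xi''$, contradicting $\xi'\in C(B)$. Therefore $\m\subset M$, so $M/\m B[C]\supset Q=\m_\eta$, which is already maximal, giving $M=M_\eta$ with $\eta\in K$. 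The main obstacle is the existence in this non-Noetherian setting of maximal ideals of $B[C]$ with $M\cap B=0$---e.g.\ $(x/a-1)B[x]\subset B[x]$ gives $B[x]/(x/a-1)\cong R$ when $a\in R\setminus B$---and compactness of $K$ is precisely what rules such maximal ideals out.
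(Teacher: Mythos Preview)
Your approach differs from the paper's short argument, which contracts $P=\supp(\alpha)$ along $A\hookrightarrow A\otimes B$ to get $\p=P\cap A$, claims $\p\ne(0)$ from $P\not\subset A\otimes\m$, and then uses that $(A\otimes B)/(\p\otimes B)\cong B$ is local to pin down $M$. You instead work with the reduction $B[C]\to\R[C]$ and with Proposition~\ref{remaxidrelat}, splitting into cases according to $\supp(\alpha)\cap B$.

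The case split is where your argument has a gap: you only treat $\supp(\alpha)\cap B\in\{0,\m\}$, but since $B$ can have Krull dimension greater than one (as the paper itself points out), $\supp(\alpha)\cap B$ may well be an intermediate prime $\mathfrak q$. The same oversight recurs at the end, where to force $\m\subset M$ you only rule out $M\cap B=0$, ignoring $M\cap B=\mathfrak q$ intermediate. Both issues disappear with a cleaner endgame in your horizontal case: once you have shown $\xi'\in C(B)$, evaluation at $\xi'$ gives a \emph{surjection} $B[C]\twoheadrightarrow B$ whose kernel is exactly $\supp(\alpha)$; hence $B[C]/\supp(\alpha)\cong B$ is a local ring, and the unique maximal ideal above $\supp(\alpha)$ must equal both $M$ and $M_\eta$, with no further discussion of $M\cap B$ needed. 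For the missing case $0\subsetneq\mathfrak q=\supp(\alpha)\cap B\subsetneq\m$, reduce modulo $\mathfrak q\,B[C]$ and run the horizontal argument over the valuation ring $B/\mathfrak q$ (whose residue field is still $\R$ and whose fraction field $\kappa(\mathfrak q)$ is still real closed), obtaining $B[C]/\supp(\alpha)\cong B/\mathfrak q$, again local. With this repair your proof is complete; it also transparently covers examples such as $\supp(\alpha)=(x-1-\epsilon)\subset B[x]$ with $\epsilon\in\m$ transcendental over $\R$, where $P\cap A=(0)$ and the paper's opening implication is not self-evident.
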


\begin{proof}
Write $A=\R[C]$ as before. Let $P=\supp(\beta)$, write $\q=P\cap B$,
and let $k=R(\q)=B_\q/\q B_\q$ be the residue field of the prime
ideal $\q$ of $B$. The field $k$ is real closed.
The sequence of ring homomorphisms $B\to A\otimes B\to A\otimes k$
induces, by taking preimages, a sequence of maps
$$\Spec(A\otimes k)\>\labelto j\>\Spec(A\otimes B)\>\labelto\pi\>
\Spec(B)$$
between the Zariski spectra. The map $j$ is a bijection from
$\Spec(A\otimes k)$ to the preimage $\pi^{-1}(\q)=C_k$ of $\q$ under
$\pi$, and this bijection preserves residue fields of prime ideals.
Since $A\otimes k=k[C]$ is a one-dimensional integral domain, since
the zero ideal of $A\otimes k$ corresponds to $A\otimes\q\in\pi^{-1}
(\q)$, and since $P\not\subset A\otimes\m$ by assumption, we see that
$P$ corresponds to a maximal ideal of $A\otimes k$ under this
bijection.
Since moreover the residue field of $P$ is real, there exists a point
$\eta\in C(k)$ such that $P$ is the kernel of the homomorphism
$$A\otimes B\>=\>\R[C]\otimes B\>\to\>\R[C]\otimes k\>=\>k[C]\>
\labelto\eta\>k.$$
So $(A\otimes B)/P$ is isomorphic to a subring of $k$ that contains
the valuation ring $B/\q$ of~$k$. Therefore $(A\otimes B)/P$ is a
valuation ring itself, and in particular, a local ring. Therefore $M$
is the unique maximal ideal of $A\otimes B$ that contains $P$.
On the other hand, by Proposition \ref{remaxidrelat}, there exists
$\xi\in K\subset C(\R)$ such that $\beta$ specializes to
$\alpha_\xi$, and hence $P\subset M_\xi$.
This shows $M=M_\xi$.
\end{proof}

\begin{lab}
We keep fixing the extension $\R\subset R$ and the valuation ring $B$
of $R$ as before. We now assume that $C$ is a nonsingular and
geometrically irreducible affine algebraic curve over $\R$, and we
consider the affine scheme $C\times_{\Spec(\R)}\Spec(B)=
\Spec(\R[C]\otimes B)$. This is a relative affine curve over
$\Spec(B)$. If $B$ were a discrete valuation ring, the situation
would be a (very particular) instance of a relative curve over a
Dedekind scheme, hence an arithmetic surface. However, $B$ has
divisible value group and therefore is not noetherian (as long as
$R\ne\R$). Moreover, the Krull dimension of $B$ can be arbitrarily
large. Therefore we cannot directly rely on arguments that are
well-known for arithmetic surfaces, or simply for noetherian rings.
Still, the situation and the auxiliary results we are about to prove,
resemble the case of a relative curve over a discrete valuation ring.

The function field of $C$, resp.\ of $C_R$, is as usual denoted by
$\R(C):=\Quot\R[C]$, resp.\ by $R(C):=\Quot R[C]$.
\end{lab}

\begin{lab}\label{spezdfn}%
Let $R'=R(\sqrt{-1})$ be the algebraic closure of $R$, and let $B'=
B[\sqrt{-1}]$, a valuation ring of $R'$ that extends the valuation
ring $B$ of $R$. The maximal ideal of $B'$ will be denoted $\m'$, and
we have $B'/\m'=\C$.
The valuation $v$ on $R$ (see \ref{wsorit}), resp.\ $w$ on $R(C)$
(see \ref{geomintbew}), extends uniquely to a valuation on $R'$,
resp.\ on $R'(C)$, and we use the same letter $v$, resp.\ $w$, to
denote this extension.
The residue field of the valuation $v$ on $R'$ is $\C$, and the
residue field of the valuation $w$ on $R'(C)$ is $\C(C)$, the complex
function field of the curve $C$. Given $g\in R'(C)$ with $w(g)\ge0$,
we denote the residue class of $g$ in $\C(C)$ by $\ol g$. Also, we
write $B'[C]=\R[C]\otimes B'$ and $R'[C]=\R[C]\otimes R'$. Again we
have $B'[C]=\{f\in R'[C]\colon w(f)=0\}$.

We consider the natural specialization map
$$C(B')\>\to\>C(\C),\quad\eta\mapsto\ol\eta$$
defined by composing a homomorphism $\eta\colon\R[C]\to B'$ with the
residue map $B'\to B'/\m'=\C$. Note that $\eta\in C(B')$ specializes
to
$\xi\in C(\C)$ (that is, $\ol\eta=\xi$) if, and only if, $h(\eta)=0$
implies $\ol h(\xi)=0$, for every $h\in B'[C]$.
Given $\xi\in C(\C)$, we'll use the notation
$$U(\xi)\>:=\>\{\eta\in C(B')\colon\ol\eta=\xi\},$$
so this is the set of $B'$-rational points of $C$ that specialize to
the $\C$-rational point~$\xi$. The maximal ideal of $B[C]$ associated
with $\xi\in C(\C)$ is denoted $M_\xi=\{f\in B[C]\colon
\ol f(\xi)=0\}$, see \ref{krprep}.

The zero or pole order of a rational function $g$ on a nonsingular
curve in a geometric point $\xi$ will be denoted by $\ord_\xi(g)$.
Thus, given $f\in B'[C]$ and $\eta\in C(R')$, the symbol $\ord_\eta
(f)$ denotes the vanishing order of $f$ in the point $\eta$ of the
generic fibre $C_{R'}$. For $\xi\in C(\C)$, on the other hand, the
symbol $\ord_\xi(\ol f)$ denotes the vanishing order in $\xi$ of the
restriction $\ol f$ of $f$ to the special fibre $C$.
Below (Proposition \ref{nstordspez}) we show how the vanishing orders
of $f$ in points of the generic fibre determine the vanishing orders
of $\ol f$ in the points of the special fibre.
\end{lab}

\begin{lem}\label{ord0quotbewr}%
Let $g\in R(C)^*$ satisfy $w(g)=0$, let $\xi\in C(\C)$ be a geometric
point of the special fibre, and assume $\ord_\eta(g)\ge0$ for every
$\eta\in U(\xi)$. Then there exist $0\ne f,\,h\in B[C]$ with
$g=\frac fh$ and $\ol h(\xi)\ne0$. In other words, $g$ lies in the
localized ring $B[C]_{M_\xi}$.
\end{lem}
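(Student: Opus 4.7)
I plan to represent $g$ as $f/h$ with $f,h\in B[C]$ by starting from any fractional representation and then multiplying numerator and denominator by an auxiliary $h\in B[C]$ that simultaneously (i) kills the generic-fibre pole divisor of $g$ and (ii) reduces to a nonzero constant at $\xi$.

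\emph{Setup.} First write $g=p_0/q_0$ with $p_0,q_0\in R[C]$. Since $w(g)=w(p_0)-w(q_0)=0$, scaling both by $c\in R^*$ with $v(c)=-w(p_0)=-w(q_0)$ arranges $w(p_0)=w(q_0)=0$, so $p_0,q_0\in B[C]$ with nonzero reductions in $\R[C]$. If $\ol{q_0}(\xi)\ne0$ we are done. Otherwise, it suffices to produce some $h\in B[C]$ with $\ol h(\xi)\ne0$ and $gh\in B[C]$, because then $f:=gh$ works. For the latter we need $\ord_\eta(h)\ge -\ord_\eta(g)$ at every pole $\eta$ of $g$ on $C_{R'}$ (to kill poles on the generic fibre) together with $w(h)\ge0$ (to ensure $w(gh)\ge0$).

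\emph{Key claim.} For every pole $\eta$ of $g$ on $C_{R'}$, the prime $Q_\eta:=\{f\in B[C]\colon f(\eta)=0\}$ of $B[C]$ is not contained in $M_\xi$. If $\eta\in C(B')$, the characterization stated at the end of \ref{spezdfn} gives $Q_\eta\subset M_\xi\iff\ol\eta=\xi$, i.e.\ $\eta\in U(\xi)$; but then the hypothesis forces $\ord_\eta(g)\ge0$, contradicting $\eta$ being a pole. If instead $\eta\in C(R')\setminus C(B')$, we build $h_\eta\in Q_\eta\setminus M_\xi$ directly: some $a\in A$ has $\alpha:=\eta(a)\notin B'$, so $v(\alpha)<0$ and $1/\alpha\in\m'$; writing $1/\alpha=c+di$ with $c,d\in\m$ (using $B'=B[i]$ and $\m'=\m+i\m$), the element $h_1:=(c+di)\,a-1\in B'[C]$ satisfies $h_1(\eta)=\alpha\cdot(1/\alpha)-1=0$, and its norm
$$h_\eta\;:=\;h_1\cdot\sigma(h_1)\;=\;(ca-1)^2+d^2a^2$$
(where $\sigma$ is the nontrivial element of $\Gal(R'/R)$) lies in $B[C]$, vanishes at $\eta$, and reduces modulo $\m$ to the constant $1$, so $h_\eta\in Q_\eta\setminus M_\xi$.

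\emph{Assembly.} Any $h_\eta\in B[C]$ is $\sigma$-invariant, hence vanishes at every Galois conjugate of $\eta$ automatically. Selecting one $h_\eta\in Q_\eta\setminus M_\xi$ per Galois orbit of poles and setting $h:=\prod h_\eta^{n_\eta}$ with $n_\eta$ the (orbit-constant) multiplicity, we obtain $h\in B[C]$ with $w(h)=0$, $\ol h(\xi)=\prod\ol{h_\eta}(\xi)^{n_\eta}\ne0$, and $\ord_\zeta(h)\ge n_\zeta$ at every pole $\zeta$ of $g$. Then $gh$ is regular everywhere on $C_{R'}$, so $gh\in R[C]$, and $w(gh)=0$ puts $gh$ into $B[C]=\{f\in R[C]\colon w(f)\ge0\}$; taking $f:=gh$ finishes the proof. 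The main obstacle is the second case of the key claim, since the hypothesis is silent about ``large'' points $\eta\in C(R')\setminus C(B')$, and one must manufacture $h_\eta$ by hand from the size of $\eta(a)$ via the norm construction above.
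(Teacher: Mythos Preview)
Your proof is correct and follows the same strategy as the paper: clear the poles of $g$ by an auxiliary $h\in B[C]$ with $\ol h(\xi)\ne0$, then take $f=gh$. The paper phrases this in terms of the zeros of an initial denominator $a$ and simply asserts the existence of such an $h$ (``since none of the $\zeta_j$ specializes to $\xi$''); your explicit norm construction $h_\eta=(ca-1)^2+d^2a^2$ for poles $\eta\in C(R')\setminus C(B')$ actually fills in a detail the paper leaves to the reader.
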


\begin{proof}
We can write $g=\frac ba$ with $0\ne a,\,b\in R[C]$. By scaling
$a$ and~$b$ with a nonzero element of $R$ we clearly can assume
$w(a)=w(b)=0$.
So in particular $a,\,b\in B[C]$.

Let $\eta_1,\dots,\eta_r$ be the zeros of $a$ in $U(\xi)$, and let
$\zeta_1,\dots,\zeta_s$ be the remaining zeros of $a$ in $C(R')$.
For each $j=1,\dots,s$ there exists $h_j\in B[C]$ satisfying
$\ol h_j(\xi)\ne0$ and $h_j(\zeta_j)=0$, since $\ol\zeta_j\ne\xi$.
By taking a product of suitable powers of these $h_j$, we find $h\in
B[C]$ satisfying $\ol h(\xi)\ne0$ and $\ord_{\zeta_j}(h)\ge
\ord_{\zeta_j}(a)$ for $j=1,\dots,s$.

For any point $\eta\in C(R')$ we claim that $\ord_\eta(bh)\ge
\ord_\eta(a)$ holds. Indeed, this is trivial if $a(\eta)\ne0$. For
$\eta\in\{\zeta_1,\dots,\zeta_s\}$ it is so by the choice of~$h$.
For $\eta\in\{\eta_1,\dots,\eta_r\}$ it is true since $\ord_\eta(b)
\ge\ord_\eta(a)$ by the assumption on $g$.
So $gh=\frac{bh}a$ has no poles in $C(R')$, and therefore lies in
$R[C]$. Since $w(gh)=0$,
we have $gh\in B[C]$, so it suffices to take $f:=gh$.
\end{proof}

The analogue of Lemma \ref{ord0quotbewr} in algebraic geometry would
be the following statement: If $V$ is a nonsingular complex algebraic
surface and $\xi\in V(\C)$, and if a rational function $g\in\C(V)^*$
has no pole along any curve $C\subset V$ through $\xi$, then
$g\in\scrO_{V,\xi}$. (Indeed, the noetherian local ring
$\scrO_{V,\xi}$, being integrally closed, is the intersection of its
localizations at all height one prime ideals.)

\begin{lem}\label{planecurvcase}%
Let $f,\,g\in B'[x,y]$ be polynomials such that the coefficient-wise
reduced polynomials $\ol f,\,\ol g\in\C[x,y]$ are not identically
zero. Assume $f(0,0)=g(0,0)=0$, and assume that the curves $\ol f=0$
and $\ol g=0$ in $\C^2$ intersect transversally at $(0,0)$. Then the
curves $f=0$ and $g=0$ in $R'^2$ intersect transversally at $(0,0)$,
and they do not intersect in any point $(a,b)\ne(0,0)$ in $R'^2$ with
$a,\,b\in\m'$.
\end{lem}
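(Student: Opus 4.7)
My plan is to deduce both assertions from a single observation: the Jacobian determinant $J := f_xg_y - f_yg_x$, evaluated at the origin, is a unit of $B'$. Since $f,g\in B'[x,y]$, we have $J\in B'[x,y]$ and hence $J(0,0)\in B'$, and its reduction modulo $\m'$ equals the Jacobian of $(\ol f,\ol g)$ at $(0,0)$, which is nonzero by the transversality hypothesis. This unit-Jacobian statement already gives the first conclusion: a nonzero Jacobian in $R'$ is precisely the definition of transversal intersection of $f=0$ and $g=0$ at $(0,0)$.

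For the second assertion I would Taylor-expand at the origin. Since $f(0,0)=g(0,0)=0$, we can write
\begin{equation*}
f = \alpha x+\beta y+P(x,y),\qquad g = \gamma x+\delta y+Q(x,y),
\end{equation*}
with $\alpha,\beta,\gamma,\delta\in B'$ and $P,Q\in B'[x,y]$ supported in monomials of degree $\ge 2$. The matrix
\begin{equation*}
M \;=\; \begin{pmatrix}\alpha&\beta\\ \gamma&\delta\end{pmatrix}
\end{equation*}
has determinant $J(0,0)\in (B')^\times$, hence $M^{-1}$ has entries in $B'$.

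Now suppose, for contradiction, that $(a,b)\in(\m')^2$ with $(a,b)\ne(0,0)$ is a further common zero. Set $\mu := \min\{w(a),w(b)\}$; since $a,b\in\m'$ and at least one is nonzero, $0<\mu<\infty$. Any monomial $a^ib^j$ with $i+j\ge 2$ has $w$-value at least $2\mu$, and the coefficients of $P,Q$ in $B'$ have nonnegative $w$-value, so $w(P(a,b))\ge 2\mu$ and $w(Q(a,b))\ge 2\mu$. The identity
\begin{equation*}
M\begin{pmatrix}a\\ b\end{pmatrix} \;=\; -\begin{pmatrix}P(a,b)\\ Q(a,b)\end{pmatrix},
\end{equation*}
together with the fact that $M^{-1}$ has $B'$-entries, forces $w(a),w(b)\ge 2\mu$, hence $\mu\ge 2\mu$, contradicting $\mu>0$.

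The only nonroutine step is the first one: recognising that the transversality hypothesis on the special fibre is equivalent to invertibility of the linear part over the valuation ring $B'$. That is what makes the valuation contraction go through, and it lets us avoid invoking Hensel's lemma or an implicit function theorem in the non-noetherian ring $B'$, which would be awkward here.
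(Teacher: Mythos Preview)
Your proof is correct and follows essentially the same approach as the paper's. The only cosmetic difference is that the paper first performs a $B'$-linear change of coordinates (possible precisely because your matrix $M$ is invertible over $B'$) to arrange $f=x+\text{(higher)}$ and $g=y+\text{(higher)}$, and then runs the valuation estimate directly, whereas you leave the coordinates alone and apply $M^{-1}$ at the end; the underlying valuation argument is identical. (A minor notational point: since $a,b\in R'$, the valuation you are using is $v$ rather than $w$, though of course $w$ restricts to $v$ on constants.)
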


\begin{proof}
The gradient vectors of $f$ and $g$ at the origin lie in $B'^2$, and
by assumption they are linearly independent modulo $\m'$. Hence they
are linearly independent in $R'^2$, which is the first assertion.
After a linear change of coordinates we can assume
$$f\>=\>x+\sum_{d\ge2}f_d(x,y),\quad g\>=\>y+\sum_{d\ge2}g_d(x,y)$$
where $f_d,\,g_d\in B'[x,y]$ are homogenous polynomials of degree
$d$, for $d\ge2$. Let $(0,0)\ne(a,b)\in\m'\times\m'$, and assume
$v(a)\le v(b)$. Since $v(a)>0$ we see that $v(f(a,b)-a)>v(a)$,
whence $v(f(a,b))=v(a)$, and therefore $f(a,b)\ne0$. Likewise,
$v(a)\ge v(b)$ implies $v(g(a,b))=v(b)$ and $g(a,b)\ne0$.
\end{proof}

\begin{lem}\label{spezordx2x}%
Let $\eta\in C(B')$, let $\xi=\ol\eta\in C(\C)$.
\begin{itemize}
\item[(a)]
There is $s\in B'[C]$ such that $s(\eta)=0$ and $\ord_{\xi}(\ol s)=
1$.
\item[(b)]
If $\eta\in C(B)$ then an element $s$ satisfying (a) can be found in
$B[C]$.
\item[(c)]
For any element $s$ satisfying (a) one has $\ord_\eta(s)=1$ and
$s(\eta')\ne0$ for any $\eta'\in U(\xi)\setminus\{\eta\}$.
\end{itemize}
\end{lem}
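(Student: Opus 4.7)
The plan is to realize $s$ as a translate of a uniformizer at $\xi$. Since $C$ is nonsingular, the local ring of $C_\C$ at $\xi$ is a DVR, so I would pick $t\in\C[C]$ with $\ord_\xi(t)=1$, view it in $B'[C]$ via $\C\subset B'$, and set $s:=t-t(\eta)$. Then $s(\eta)=0$ by construction, and since $t(\eta)\in B'$ specializes to $t(\xi)=0$, i.e.\ lies in $\m'$, the reduction is $\ol s=t$, so $\ord_\xi(\ol s)=1$. For (b), the hypothesis $\eta\in C(B)$ forces $\xi\in C(\R)$, so the uniformizer can already be chosen in $\R[C]$, and then $t(\eta)\in B$ yields $s\in B[C]$.

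\textbf{For (c).} The plan is to reduce to Lemma~\ref{planecurvcase} by realizing $C$ near $\xi$ as a plane curve. By the primitive element theorem I would choose $x,y\in\R[C]$ with $\R(C)=\R(x,y)$, and by a generic choice of $y$ arrange that the induced birational morphism $\phi=(x,y)\colon C\to C'\subset\A^2_\R$, with $C'=\{F=0\}$ for an irreducible $F\in\R[X,Y]$, sends $\xi$ to a smooth point $(a,b):=(x(\xi),y(\xi))$ of $C'$. Then $\phi$ is an isomorphism of local rings at $\xi$ and induces a bijection between $U(\xi)$ and the set of $B'$-points of $C'$ specializing to $(a,b)$. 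Setting $p:=x(\eta)$, $q:=y(\eta)$, I would lift $s$ to $\tilde s\in B'[X,Y]$ under the local identification $B'[C]_{M_\xi}\cong B'[X,Y]_{(X-a,Y-b)}/(F)$, and then translate by $(X,Y)\mapsto(X+p,Y+q)$ to bring both $\eta$ and $\xi$ to the origin, obtaining $F',\tilde s'\in B'[X,Y]$ vanishing at $(0,0)$.

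The crucial point is that the transversality of the reductions $\ol{F'}=0$ and $\ol{\tilde s'}=0$ at the origin in $\C^2$ is equivalent, via the Jacobian criterion on the smooth plane curve $C'_\C$ at $(a,b)$, to the condition $\ord_\xi(\ol s)=1$, which holds by hypothesis. Lemma~\ref{planecurvcase} then applies and delivers the two conclusions of (c): transversality of $F'=0$ and $\tilde s'=0$ at the origin in $R'^2$ gives $\ord_\eta(s)=1$ (same Jacobian computation in reverse), and the absence of further intersection points in $\m'\times\m'$ translates via $\phi^{-1}$ to $s(\eta')\ne0$ for every $\eta'\in U(\xi)\setminus\{\eta\}$. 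The main obstacle will be the construction of the plane model: one must ensure by genericity of $y$ that $(a,b)$ is a smooth point of $C'$ so that $\phi$ is a local isomorphism, and verify that the correspondence $U(\xi)\leftrightarrow\{B'\text{-points of }C'\text{ above }(a,b)\}$ induced by $\phi$ is genuinely a bijection, so that the plane-curve conclusion transfers cleanly back to $C$.
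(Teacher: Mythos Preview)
Your proof is correct and follows essentially the same approach as the paper. Parts (a) and (b) are identical; for (c), the paper simply invokes the fact that a nonsingular curve is Zariski-locally isomorphic to an open subset of a (possibly singular) plane curve with $\xi$ mapping to a smooth point, and then applies Lemma~\ref{planecurvcase} directly --- which is exactly what your more explicit primitive-element construction of $\phi\colon C\to C'\subset\A^2$ accomplishes.
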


\begin{proof}
Choose $t\in B'[C]$ such that $\ol t\in\C[C]$ is a local uniformizer
at $\xi=\ol\eta$. Then $t(\eta)\in\m'$. The element $s:=t-t(\eta)$ of
$B'[C]$ has $s(\eta)=0$ and $\ol s=\ol t$, hence
$\ord_{\ol\eta}(\ol s)=1$. If $\eta$ is real, i.e.\ $\eta\in C(B)$,
then $t$ (and therefore $s$) can be found in $B[C]$. This proves (a)
and (b).

(c)
The question is local around the point $\ol\eta\in C(\C)$. Zariski
locally around any given $\C$-point, any nonsingular curve over $\C$
is isomorphic to a Zariski open subset of a plane curve over~$\C$.
Therefore we can assume that $C$ is a (possibly singular) closed
curve in $\A^2_\C$, and that $\xi=(0,0)$ is a nonsingular point of
$C$. Now assertion (c) follows from Lemma \ref{planecurvcase}.
\end{proof}

\begin{prop}\label{nstordspez}%
Let $f\in B'[C]$ satisfy $w(f)=0$. The vanishing order of $\ol f$ in
a point $\xi\in C(\C)$ satisfies
$$\ord_\xi(\ol f)\>=\>\sum_{\eta\in U(\xi)}\ord_\eta(f).$$
\end{prop}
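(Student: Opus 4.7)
The plan is to factor $f$, up to a unit in the localization $B'[C]_{M_\xi}$, into a product of local uniformizers whose specializations account precisely for $\ord_\xi(\ol f)$. Since $C_{R'}$ is an integral nonsingular affine curve and $w(f)=0$ forces $f\ne0$, the function $f$ has only finitely many zeros in $C(R')$, so only finitely many, call them $\eta_1,\dots,\eta_k$, lie in $U(\xi)$. Set $m_i:=\ord_{\eta_i}(f)\ge1$. By Lemma \ref{spezordx2x}(a), for each $i$ I choose $s_i\in B'[C]$ with $s_i(\eta_i)=0$ and $\ord_\xi(\ol{s_i})=1$; by part (c), $\ord_{\eta_i}(s_i)=1$ and $s_i(\eta')\ne0$ for every $\eta'\in U(\xi)\setminus\{\eta_i\}$.

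I then set $g:=\prod_{i=1}^k s_i^{m_i}\in B'[C]$. Note $w(g)=0$, the reduced function $\ol g=\prod_i\ol{s_i}^{m_i}$ satisfies $\ord_\xi(\ol g)=\sum_i m_i$, and by construction the generic-fibre orders match on $U(\xi)$:
$$\ord_\eta(g)=\ord_\eta(f)\quad\text{for every }\eta\in U(\xi).$$
Consequently both $u:=f/g$ and its reciprocal $g/f$ have $w$-value $0$ and nonnegative order at every $\eta\in U(\xi)$. Applying Lemma \ref{ord0quotbewr}, whose proof carries over verbatim to the ring $B'[C]$ since it only uses that the residue field is a field and that one can produce a separator at any point not specializing to $\xi$ (which Lemma \ref{spezordx2x}(a) supplies), shows that $u$ and $u^{-1}$ both lie in $B'[C]_{M_\xi}$. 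Hence $u$ is a unit of $B'[C]_{M_\xi}$, so its reduction $\ol u\in\C(C)$ is regular and nonvanishing at $\xi$, i.e.\ $\ord_\xi(\ol u)=0$.

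Finally the equality $f=g\cdot u$ in $B'[C]_{M_\xi}$ reduces modulo $\m'B'[C]_{M_\xi}$ to $\ol f=\ol g\cdot\ol u$ in the local ring of $C$ at $\xi$, yielding
$$\ord_\xi(\ol f)\>=\>\ord_\xi(\ol g)+\ord_\xi(\ol u)\>=\>\sum_{i=1}^k m_i\>=\>\sum_{\eta\in U(\xi)}\ord_\eta(f),$$
where the last equality uses that $\ord_\eta(f)=0$ for $\eta\in U(\xi)$ outside $\{\eta_1,\dots,\eta_k\}$. The only point requiring care is the passage from Lemma \ref{ord0quotbewr} over $B$ to the analogous statement over $B'$; this should be transparent from the proof, as all ingredients (approximation at a point not specializing to $\xi$, control of $w$) have immediate complex analogues via $B'=B[\sqrt{-1}]$.
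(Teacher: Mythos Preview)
Your proof is correct and follows essentially the same route as the paper's: both factor $f$ locally as a product $\prod_i s_i^{m_i}$ of the uniformizers supplied by Lemma~\ref{spezordx2x}, observe that the quotient has order zero at every point of $U(\xi)$, and then invoke Lemma~\ref{ord0quotbewr} for the quotient and its inverse to conclude it is a unit in $B'[C]_{M_\xi}$. Your explicit remark that Lemma~\ref{ord0quotbewr} needs to be used over $B'$ rather than $B$ is a point the paper passes over silently, so your treatment is in fact slightly more careful there.
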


\begin{proof}
Let $e$ denote the right hand sum in the assertion, and let
$$\{\eta\in U(\xi)\colon\>f(\eta)=0\}\>=:\>\{\eta_1,\dots,\eta_r\},$$
a finite set of points in $U(\xi)\subset C(B')\subset C(R')$. For
every $i=1,\dots,r$, choose $s_i\in
B'[C]$ with $w(s_i)=0$, $s_i(\eta_i)=0$ and $\ord_\xi(\ol s_i)=1$,
according to Lemma \ref{spezordx2x}(a). Moreover, put $e_i:=
\ord_{\eta_i}(f)$. Let $s:=s_1^{e_1}\cdots s_r^{e_r}\in B'[C]$, then
we have $w(s)=0$ and $\ord_\xi(\ol s)=e_1+\cdots+e_r=e$. Moreover,
from Lemma \ref{spezordx2x}(c) we see that $\ord_{\eta_i}(s)=e_i
=\ord_{\eta_i}(f)$ for $i=1,\dots,r$, and $s(\eta)\ne0$ for any
$\eta\in U(\xi)\setminus\{\eta_1,\dots,\eta_r\}$. Hence the rational
function $g:=\frac fs\in R'(C)^*$ has $\ord_\eta(g)=0$ for any
$\eta\in C(B')$ with $\ol\eta=\xi$. Applying Lemma \ref{ord0quotbewr}
to $g$ and $g^{-1}$ shows that $g$ is a unit in the localized ring
$B'[C]_{M_\xi}$. Thus $\ol g(\xi)\ne0$, and therefore
$\ord_\xi(\ol f)=\ord_\xi(\ol s)=e$.
\end{proof}

For an analogue of Proposition \ref{nstordspez} in algebraic geometry
let $V$ be a nonsingular complex surface and $C\subset V$ an
irreducible curve. Given a rational function $f\in\C(V)^*$ of order
zero along $C$, the proposition corresponds to the formula for the
divisor of the restriction of $f$ to~$C$.


\section{Main theorem}\label{sect:main}%

The following fact is well-known:

\begin{thm}\label{satpofg}%
Let $C$ be a nonsingular affine curve over $\R$, and let $K\subset
C(\R)$ be a compact \sa\ set. Then the saturated preordering
$\scrP(K)$ of $K$ in $\R[C]$ is finitely generated.
\end{thm}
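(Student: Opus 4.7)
The plan is to exhibit an explicit finite set of generators adapted to the boundary structure of $K$, then verify that the preordering they generate coincides with $\scrP(K)$ by means of the archimedean local-global principle \cite{sch:surf}.

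Since $\dim C=1$ and $K$ is compact semi-algebraic, I would first decompose $K$ as a finite disjoint union of closed arcs (in components of $C(\R)$) together with finitely many isolated real points, and list the (finite) boundary set $\partial K=\{p_1,\dots,p_m\}\subset C(\R)$. Nonsingularity makes each local ring $\mathcal O_{C,p_j}$ a DVR, so I can fix a local uniformizer $t_j$ oriented so that $t_j\ge0$ on the $K$-side of $p_j$. Using Riemann-Roch applied to divisors supported outside $\R$-points (or, in the affine setting, adding functions with carefully chosen poles at infinity), I would produce regular functions $h_1,\dots,h_r\in\R[C]$ such that: (i) $K=\{h_1\ge0,\dots,h_r\ge0\}$, (ii) at each boundary point $p_j$ exactly one $h_i$ vanishes to order $1$ with the correct sign, and (iii) the isolated points of $K$ are cut out by a pair of generators of the form $\pm g$, where $g$ vanishes to even order at each such point and has the right sign globally.

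I would then claim that $\scrP(K)=T$ where $T=\Sigma+h_1\Sigma+\cdots+h_r\Sigma$ and $\Sigma=\Sigma\R[C]^2$. The inclusion $T\subset\scrP(K)$ is immediate. For the reverse, after possibly adjoining the bounded generator $N-\sum x_i^2$ to ensure $T$ is archimedean (using compactness of $K$), the archimedean local-global principle reduces membership $f\in T$ to a local check at every point $\alpha\in\Sper\R[C]$ that lies in $\wt K$. At an interior point of $K$ the relevant local preordering is that of a discretely valued $\R$-algebra where every positive element is a square; at a boundary point $p_j$ the local ring completes to $\R[[t_j]]$, in which every nonnegative power series has the form $\sigma_0+t_j\sigma_1$ with $\sigma_0,\sigma_1$ sums of squares, matching the generator $h_i=t_j\cdot(\text{unit})$. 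Saturation therefore holds locally everywhere, and the local-global principle upgrades this to a global representation.

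The main obstacle is exactly this patching step. Local Positivstellensatz arguments on a curve are essentially one-variable, so they are easy; the nontrivial content lies in combining them into a global identity in $\R[C]$, and it is the archimedean hypothesis on $T$ (available because $K$ is compact) that makes this possible. A secondary subtlety is the construction of the generators $h_i$ when $K$ has several disconnected arcs or isolated points, where one must arrange that no $h_i$ develops an unwanted zero on $K$ that would alter the preordering generated.
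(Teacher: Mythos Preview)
Your overall strategy---construct generators adapted to the boundary of $K$ and then invoke the archimedean local-global principle to verify saturation via local computations in $\R[[t]]$---is exactly what the paper sketches in \ref{satpofgpf}. The non-isolated case is handled just as you describe: a single function with simple zeros at the boundary points of $K$ and no other zeros on $K$ already generates $\scrP(K)$, and the local check at each boundary point reduces to the elementary fact that every element of $\R[[t]]$ nonnegative on $\{t\ge0\}$ lies in $\Sigma+t\,\Sigma$.

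Your treatment of isolated points, however, does not work. If $p\in K$ is isolated and $g\in\R[C]$ vanishes to even order at $p$ with $g\ge0$ on $K$, then $-g\le0$ on $K$, so $-g\notin\scrP(K)$ unless $g$ vanishes on all of $K$; thus $g$ and $-g$ cannot both serve as generators. Even locally the idea fails: in $\R[[t]]$ the preordering generated by $t^2$ and $-t^2$ is $\Sigma-t^2\Sigma$, which does not contain $t$, whereas the saturated local preordering at an isolated point is $\{f\in\R[[t]]:f(0)\ge0\}$ and requires generators behaving like $t$ and $-t$. What one actually needs are two global elements $f_1,f_2\in\scrP(K)$, each vanishing to order~$1$ at $p$ but with opposite signs on the two local half-branches. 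The paper manufactures these by a neat reduction: enlarge $K$ to $K_1$ by replacing each isolated point $\xi_i$ with a short one-sided interval $[\xi_i,\eta_i]$, and to $K_2$ using an interval $[\eta_i',\xi_i]$ on the opposite side. Neither $K_1$ nor $K_2$ has isolated points, so each $\scrP(K_j)$ is singly generated by some $f_j$; since $K\subset K_j$ we have $f_j\in\scrP(K)$, and at each $\xi_i$ the pair $(f_1,f_2)$ supplies both $t$ and $-t$ locally. One more application of the archimedean local-global principle then shows that $f_1,f_2$ generate $\scrP(K)$.
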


This is proved in \cite{sch:mz} Theorem 5.21. More precisely (by
\cite{sch:mz} 5.22(b)), $\scrP(K)$ can be generated by two elements
(even as a quadratic module), and can in fact be generated by a
single element whenever $K$ has no isolated points. If $K=C(\R)$
(assuming this set is compact) we have $\scrP(K)=\Sigma\R[C]^2$.

\begin{lab}\label{satpofgpf}%
Let us briefly indicate how Theorem \ref{satpofg} can be proved. (The
proof given in \cite{sch:mz} was more complicated since the
archimedean local-global principle was not yet available at that
time.)
When $K$ has no isolated points, then $\scrP(K)$ is generated by any
$f\in\scrP(K)$ which has simple zeros in the boundary points of $K$
and has no other zeros in $K$ (one can show that such $f$ exists).
This follows from the archimedean local-global principle (see Theorem
\ref{archlgp} below).
In the general case, let $\xi_1,\dots,\xi_r$ be the isolated points
of $K$. We modify the set $K$ by replacing each isolated point
$\xi_i$ with a small closed interval $[\xi_i,\eta_i]$ on $C(\R)$, for
which $\eta_i\ne\xi_i$ lies on the same connected component of
$C(\R)$ as $\xi_i$, and the intervall is so small that
$[\xi_i,\eta_i]\cap K=\{\xi_i\}$. Let $K_1$ be the modified set
obtained in this way, and note that $K_1$ has no isolated points. Let
$K_2$ be a second such modification of $K$ in
which $\xi_i$ gets replaced by $[\eta_i',\xi_i]$, where $\eta'_i\ne
\xi_i$ is again chosen close to $\xi_i$, but such that $\eta_i$ and
$\eta'_i$ lie on opposite sides of $\xi_i$ on the local branch of
$C(\R)$ around $\xi_i$. Then, by the first part of the argument,
there exists a single generator $f_j$ of $\scrP(K_j)$, for both $j=1,
2$. Again using the archimedean local-global principle, one concludes
that $\scrP(K)$ is generated by $f_1$ and $f_2$.
\end{lab}

The following theorem, resp.\ its corollary, is the first main
result of this paper:

\begin{thm}\label{main}%
Let $C$ be a nonsingular affine curve over $\R$, let $K\subset C(\R)$
be a compact \sa\ set, and let $T=\scrP(K)$ be the saturated
preordering of $K$ in $\R[C]$. For any real closed field $R$
containing $\R$, the preordering $T_R$ generated by $T$ in $R[C]$ is
saturated as well.
\end{thm}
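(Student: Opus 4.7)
The plan is to transfer the problem from $R[C]$ to the valuation subring $B[C]=\R[C]\otimes B$, then invoke the archimedean local-global principle of \cite{sch:surf} to reduce to local membership questions at the maximal ideals $M_\xi$ with $\xi\in K$, where Proposition \ref{nstordspez} will translate the question into the behaviour of the reduction $\ol f\in\R[C]$. By Theorem \ref{satpofg}, fix generators $h_1,\dots,h_s$ of $T=\scrP(K)$ in $\R[C]$ and write $T_B$, $T_R$ for the preorderings they generate in $B[C]$, $R[C]$. Let $f\in R[C]$ be nonnegative on $\wt{K_R}$. Rescaling by a square in $R^\times$ preserves both hypothesis and conclusion, so I may assume $f\in B[C]$ and $w(f)=0$; Lemma \ref{psdinox} then gives that $f$ is nonnegative on $X_K\subset\Sper(B[C])$. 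Since $T_B\subset T_R$, it suffices to prove $f\in T_B$.

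For this I would invoke the archimedean local-global principle on $B[C]$: since compactness of $K$ yields $N-\sum_jx_j^2\in T$ for some $N\in\N$ (after fixing an affine embedding of $C$), $T_B$ is archimedean over $\R$, and the principle reduces $f\in T_B$ to a local statement at every maximal ideal of $B[C]$ that supports a point of $X_K$. By Proposition \ref{remaxidrelat} together with Lemma \ref{spezreelrelat}, these maximal ideals are precisely the $M_\xi$ for $\xi\in K$.

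Fix such a $\xi$. From $f\ge0$ on $X_K$ and $\alpha_\eta\in X_K$ for $\eta\in K$ one has $\ol f\ge0$ on $K$, so $\ol f\in\scrP(K)=T$. Fix a representation $\ol f=\sum_\nu\ol\sigma_\nu\prod_{i\in\nu}h_i$ and lift each $\ol\sigma_\nu$ termwise to $\sigma_\nu\in\Sigma B[C]^2$, obtaining $g=\sum_\nu\sigma_\nu\prod_{i\in\nu}h_i\in T_B$ with $\ol g=\ol f$; hence $h:=f-g\in\m\cdot B[C]$. Proposition \ref{nstordspez} forces the vanishing orders of $f$ and of $g$ at the points of $U(\xi)\subset C(R')$ to each sum to $\ord_\xi(\ol f)$, so if $s\in B[C]$ is a local parameter at $\xi$ supplied by Lemma \ref{spezordx2x}, Lemma \ref{ord0quotbewr} places $h/s^{e}$, for $e=\ord_\xi(\ol f)$, in $B[C]_{M_\xi}$. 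Using the explicit simple-zero structure of the generators of $T$ at boundary points of $K$ recalled in \ref{satpofgpf}, this quotient can then be absorbed into a modified sum-of-squares term in $T_B$, producing $u\in B[C]\setminus M_\xi$ with $uf\in T_B$, as required.

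The main obstacle will be the local absorption: the remainder $f-g$ is only small modulo $\m$, and $\m B[C]$ is by no means small in any noetherian sense. The careful use of Proposition \ref{nstordspez} is what converts specialization data on the special fibre into vanishing control of $f$ at all points of $U(\xi)$ on the generic fibre, enabling the absorption. Once these local witnesses $uf\in T_B$ are in place, the global conclusion via the archimedean local-global principle should be routine, given the archimedeanicity of $T_B$ over $\R$ coming from compactness of $K$.
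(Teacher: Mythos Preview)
Your global strategy matches the paper's: pass to $B[C]$, normalize so that $w(f)=0$, observe that $T_B$ is archimedean, and apply the archimedean local-global principle to reduce to showing $f\in T_{M_\xi}$ for each $\xi\in K$. (One small omission: you must also dispose of the maximal ideals $M$ at which $f>0$ on $X_{K,M}$; the paper handles this by a direct citation of \cite{sch:lgp1}, Proposition~2.1, and only afterwards uses Lemma~\ref{spezreelrelat} to conclude that the remaining $M$ are of the form $M_\xi$.)

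The genuine gap is in your local step. You lift a representation of $\ol f\in T$ to some $g\in T_B$ with $\ol g=\ol f$ and then try to absorb $h=f-g$. Your claim that Lemma~\ref{ord0quotbewr} puts $h/s^e$ into $B[C]_{M_\xi}$ is false. Proposition~\ref{nstordspez} tells you that the zeros of $f$ and of $g$ in $U(\xi)$ have the same \emph{total} order $e=\ord_\xi(\ol f)$, but it says nothing about the zeros of $h=f-g$; moreover $w(h)>0$, so neither Proposition~\ref{nstordspez} nor Lemma~\ref{ord0quotbewr} applies to $h$ as stated. A concrete counterexample on $C=\A^1$, $K=[-1,1]$, with $\epsilon\in\m$: take $f=(x-\epsilon)^2$, so $\ol f=x^2$, $g=x^2$, $h=-2\epsilon x+\epsilon^2$, $s=x$, $e=2$. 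Then $h/x^2$ has a genuine pole at $0\in U(0)$ and does not lie in $B[C]_{M_0}$. Knowing only that $\ol g=\ol f$ gives you no control over where $g$ vanishes on the generic fibre, so there is no mechanism to absorb $h$ into the weighted sums of squares.

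The paper avoids this entirely: it never lifts a representation of $\ol f$. Instead it constructs, for each zero $\eta$ of $f$ in $U(\xi)$, an element $p_\eta\in T_B$ with $p_\eta(\eta)=0$ and with $\ord_\xi(\ol{p_\eta})$ equal to $1$ or $2$ according to whether $\eta$ is a real boundary point, a real interior point, or nonreal (Lemma~\ref{chirurg}). Taking the product $p$ of suitable powers of the $p_\eta$, Proposition~\ref{nstordspez} guarantees $\ord_\xi(\ol p)=\ord_\xi(\ol f)$, and by construction the zeros of $p$ and $f$ in $U(\xi)$ agree with multiplicity, so $g:=f/p$ is a \emph{unit} in $B[C]_{M_\xi}$. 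One then checks $g>0$ on $X_{K,M_\xi}$ and concludes $g\in T_{M_\xi}$ (again via \cite{sch:lgp1}), whence $f=pg\in T_{M_\xi}$. The crucial idea you are missing is to build $p\in T_B$ vanishing at the \emph{generic-fibre} zeros of $f$, not merely matching $f$ on the special fibre; the case analysis in Lemma~\ref{chirurg} (especially the boundary case, where one shifts a generator $t\in T$ with $\ord_\xi(t)=1$ by the nonpositive value $t(\eta)$) is what makes this possible.
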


Using the notion of stable preordering, see
\ref{stablrappl}, we can give the following equivalent formulation:

\begin{cor}\label{mainstab}%
For $C$ and $K$ as in \ref{main}, the preordering $\scrP(K)$ in
$\R[C]$ is stable.
\end{cor}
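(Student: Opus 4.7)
The plan is to derive stability from the base-change saturation of Theorem \ref{main} via a standard ultrapower transfer argument.

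By Theorem \ref{satpofg}, fix generators $1=h_0,h_1,\dots,h_r$ of $T:=\scrP(K)$, and fix a degree filtration $\R[C]=\bigcup_d W^{(d)}$ coming from an embedding of $C$ in affine space. For each $d\in\N$ set
$$M^{(d)}\>:=\>\Sigma_{W^{(d)}}h_0+\cdots+\Sigma_{W^{(d)}}h_r,$$
so $T=\bigcup_d M^{(d)}$ and stability of $T$ becomes the assertion that, for every finite-dimensional $U\subset\R[C]$, some $M^{(d)}$ already contains $T\cap U$. Suppose this fails: then one may choose $U$ and, for each $d\in\N$, an element $f_d\in T\cap U$ with $f_d\notin M^{(d)}$, normalized so that $\|f_d\|_U=1$ in some fixed norm on $U$.

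Fix a nonprincipal ultrafilter $\mathcal F$ on $\N$, let $R:=\R^\N/\mathcal F$ be the corresponding real closed ultrapower of $\R$, and let $B\subset R$ be the convex hull of $\R$. Because $U$ is finite-dimensional, the coordinate-wise ultralimit $f^*:=[f_d]_{\mathcal F}$ defines an element of $U\otimes B\subset B[C]\subset R[C]$, and is nonzero. The first-order statement ``$\forall x\in V,\ h_1(x)\ge 0\wedge\cdots\wedge h_r(x)\ge 0\Rightarrow f(x)\ge 0$'' holds with $f:=f_d$ for every $d$, so by \L{}o\'s' theorem it holds for $f=f^*$; hence $f^*\ge 0$ on $K_R$ and therefore on the constructible set $\wt{K_R}$. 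By Theorem \ref{main} the preordering $T_R$ is saturated, and so $f^*\in T_R$.

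On the other hand, for each fixed $d_0$ the cone $M^{(d_0)}_R$ is the image of a polynomial map from a finite-dimensional $R$-vector space (by a Carath\'eodory bound, each $\Sigma_{W^{(d_0)}}$ needs only boundedly many squares). A further application of \L{}o\'s therefore shows that $f^*\in M^{(d_0)}_R$ would force $f_d\in M^{(d_0)}$ for $d$ in some $\mathcal F$-set; but $M^{(d_0)}\subset M^{(d)}$ and $f_d\notin M^{(d)}$ for $d\ge d_0$ make this set finite, contradicting nonprincipality of $\mathcal F$. Since every element of $T_R$ is a finite sum $\sum\sigma_ih_i$ with SOS summands in $R[C]$ of some fixed maximum degree, we have $T_R=\bigcup_{d_0}M^{(d_0)}_R$, contradicting $f^*\in T_R$.

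The main obstacle is guaranteeing that $f^*$ truly lies in $T_R$: this is where the normalization is essential, for otherwise the ultralimit might escape $R[C]$ into an uncontrolled ultrapower of $\R[C]$. Forcing $\|f_d\|_U=1$ keeps the coordinates bounded, placing $f^*$ in $U\otimes B\subset R[C]$, after which the first-order transfer of nonnegativity on $K$ combined with Theorem \ref{main} delivers $f^*\in T_R$.
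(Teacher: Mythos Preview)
Your argument is correct and follows the same route as the paper: stability of $\scrP(K)$ is derived from Theorem~\ref{main} via the principle that a finitely generated preordering is stable if and only if its extension to every real closed overfield is saturated; the paper simply cites this equivalence as \cite{sch:stab} Corollary~3.8, whereas you unfold the needed direction with an explicit ultrapower/\L{}o\'s transfer, making your version more self-contained but mathematically the same. One minor remark on your closing paragraph: what keeps $f^*$ inside $R[C]$ is that all the $f_d$ lie in the \emph{fixed} finite-dimensional space $U$, so that $[f_d]_{\mathcal F}\in U\otimes R$ automatically --- the normalization $\|f_d\|_U=1$ only serves to guarantee $f^*\ne 0$, and even that is not strictly needed once \L{}o\'s is applied to the first-order membership condition for $M^{(d_0)}$.
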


\begin{proof}
By \cite{sch:stab} Corollary 3.8, $T=\scrP(K)$ is stable if and only
if for every real closed field $R$ containing~$\R$, the preordering
$T_R$ is saturated in $R[C]$. So \ref{mainstab} is equivalent to
\ref{main}.
\end{proof}

\begin{rems}
For the following remarks assume that the nonsingular affine curve
$C$ is irreducible.
\smallskip

1.\
When $C$ is rational, the assertions of Theorem \ref{main} and
Corollary \ref{mainstab} are true regardless whether $K$ is compact
or not. More precisely, assume that $C$ is a nonsingular rational
affine curve, and let $K\subset C(\R)$ be any closed \sa\ subset.
Then the saturated preordering $\scrP(K)$ of $K$ in $\R[C]$ is
finitely generated, and is stable. This is well-known, and
essentially elementary.
\smallskip

2.\
When $C$ has genus one and $C(\R)$ is compact, \ref{main} and
\ref{mainstab} were proved for $K=C(\R)$ in \cite{sch:conv}. In all
other cases of positive genus, these results are new.
\smallskip

3.\
When $C$ is nonsingular of genus $\ge1$ and $K\subset C(\R)$ is a
closed \sa\ set that is not compact, two situations can occur. Either
$K$ is virtually compact (see \ref{virtcptrappl} below); in this case
we'll later prove that the above results remain true (Theorem
\ref{vcptmain} below). Or else $K$ fails to be virtually compact;
then it is known that the preordering $\scrP(K)$ fails to be finitely
generated (\cite{sch:mz} Theorem 5.21), and so the notion of
stability does not even make sense for it. See \ref{virtcptex} below
for both examples and non-examples of virtually compact sets.
\end{rems}

Before giving the actual proof of Theorem \ref{main}, we need some
preparations. First recall the archimedean local-global principle:

\begin{thm}\label{archlgp}%
\emph{(\cite{sch:surf} Corollary 2.10)}
Let $A$ be a ring containing $\frac12$, let $P$ be an archimedean
preordering in $A$, and let $f$ be an element of the saturation of
$P$. Then $f$ lies in $P$ if (and only if) $f$ lies in $P_\m$ for
every maximal ideal $\m$ of $A$.
\end{thm}

Here $P_\m$ is the preordering generated by $P$ in the localized ring
$A_\m$. See \ref{quadmodsorit} for the notions of archimedean
preordering and saturation.

\begin{lab}\label{proofstart}%
Let in the following $C$ be a nonsingular affine curve over $\R$, let
$K\subset C(\R)$ be a compact \sa\ subset and $T=\scrP(K)\subset
\R[C]$. Moreover let $R$ be a real closed field containing $\R$, and
let $B$ be the convex hull of $\R$ in $R$ (see \ref{wsorit}). We
shall work in the ring $B[C]=
\R[C]\otimes B$, and shall use the auxiliary results from
Section~\ref{sect:aux}. In particular, we use the notation introduced
there. Let $T_B$ be the preordering generated by $T$ in $B[C]$. The
saturation of $T_B$ consists of all $f\in B[C]$ with $f\ge0$
on~$X_K$.
\end{lab}

\begin{lem}\label{archmodringerw}%
The preordering $T_B$ in $B[C]$ is archimedean.
\end{lem}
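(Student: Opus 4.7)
The plan is to apply the standard archimedean criterion: a preordering $T'$ in a commutative ring $A$ is archimedean iff the set
$$H \>:=\> \{f \in A : \exists\, N \in \N,\ N \pm f \in T'\}$$
equals $A$. A routine computation shows that $H$ is always a subring of $B[C]$: closure under addition is immediate, and closure under multiplication follows from the standard identity expressing $NM \pm fg$ as $\tfrac12$ times a sum of products of the bounding elements $N \pm f,\ M \pm g \in T_B$, using that $T_B$ is closed under products and that $\tfrac12 \in \Sigma B[C]^2$. Since $B[C] = \R[C] \otimes B$ is generated as a ring by $\R[C] \cup B$, it suffices to show both of these sets lie in $H$.

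For $a \in \R[C]$: compactness of $K$ gives $N \in \N$ with $|a| \le N$ on $K$, whence $N \pm a \in \scrP(K) = T \subset T_B$. Thus $\R[C] \subset H$; equivalently, $T$ is already archimedean in $\R[C]$ by compactness of $K$.

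For $b \in B$: by definition $B$ is the convex hull of $\R$ in $R$, so there is $N \in \N$ with $|b| \le N$, and then $N \pm b$ are nonnegative elements of $B$. The key elementary observation is that every nonnegative element of $B$ is a square in $B$: if $0 \le c \in B$ then $\sqrt c \in R$ because $R$ is real closed, and $\sqrt c \le \max(1, c) \in B$ forces $\sqrt c \in B$. Therefore $N \pm b \in \Sigma B^2 \subset \Sigma B[C]^2 \subset T_B$, so $B \subset H$.

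I anticipate no genuine obstacle here; this is a clean application of the ``bounded elements form a subring'' technique, and the main point to notice is which basic ingredient does the real work. The only non-formal input is the square-root closure of the valuation ring $B$ on its nonnegative part, which together with the (free) archimedeanness of $\scrP(K)$ in $\R[C]$ is exactly what converts the convex-hull definition of $B$ into archimedeanness of $T_B$.
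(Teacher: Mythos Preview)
Your proof is correct and uses essentially the same ingredients as the paper's: archimedeanness of $T=\scrP(K)$ in $\R[C]$ together with the fact that nonnegative elements of $B$ are squares in $B$. The only difference is organizational: the paper handles a general element $f=\sum f_i\otimes b_i$ (with $f_i\in T$) directly via the explicit identity $rc_1c_2-f=c_2\sum(c_1-f_i)\otimes1+\sum f_i\otimes(c_2-b_i)$, whereas you package the same computation through the standard ``bounded elements form a subring'' lemma and then check the generators $\R[C]$ and $B$ separately. One small point in your favor: you observe that archimedeanness of $T$ is immediate from saturation plus compactness of $K$, whereas the paper invokes Schm\"udgen's theorem, which is unnecessary here since $T=\scrP(K)$ is saturated by hypothesis.
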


\begin{proof}
Since $T=\scrP(K)$ is the saturated preordering in $\R[C]$ associated
with the compact set $K$, it is clear that $T$ is archimedean.
Let $f\in B[C]=\R[C]\otimes B$. Since $T-T=\R[C]$, we can write $f$
in the form $f=\sum_{i=1}^rf_i\otimes b_i$ with $f_i\in T$ and
$b_i\in B$ ($i=1,\dots,r$). Since $T$ is archimedean, there exists
$0<c_1\in\R$ with $c_1-f_i\in T$ ($i=1,\dots,r$). By the definition
of $B$ there exists $0<c_2\in\R$ with $b_i\le c_2$ in $R$ for every
$i$, and hence $c_2-b_i$ is a square in $B$ for $i=1,\dots,r$. We
conclude that
$$rc_1c_2-f\>=\>c_2\sum_{i=1}^r(c_1-f_i)\otimes1+\sum_{i=1}^rf_i
\otimes(c_2-b_i)$$
lies in $T_B$.
\end{proof}

By  $\interior(K_R)$ we denote the interior, relative to $C(R)$, of
the \sa\ subset $K_R$ of $C(R)$. The following technical lemma is
based on Lemma \ref{spezordx2x}.

\begin{lem}\label{chirurg}%
Let $\xi\in K$, and let $U(\xi)=\{\eta\in C(B')\colon\ol\eta=\xi\}$
as in \ref{spezdfn}. For every point $\eta\in U(\xi)$ there exists
an element $p_\eta\in T_B$ with $w(p_\eta)=0$, such that
$p_\eta(\eta)=0$ and
$$\ord_\xi(\ol p_\eta)\>=\>\begin{cases}1&\text{if }\eta\in C(R),\
\eta\notin\interior(K_R),\\2&\text{if }\eta\in C(R),\ \eta\in
\interior(K_R),\\2&\text{if }\eta\in C(R')\setminus C(R).
\end{cases}$$
Moreover, if $\eta=\xi$ and $\xi$ is an isolated point of $K$, there
exists a second element $p'_\xi\in T_B$ with the same properties as
$p_\xi$ and such that $p_\xi p'_\xi\le0$ on a neighborhood of $\xi$
in $C(R)$.
\end{lem}

\begin{proof}
We need to distinguish several cases. First assume $\eta\in C(R')
\setminus C(R)$.
By Lemma \ref{spezordx2x}(a) there exists $s\in B'[C]$ with
$s(\eta)=0$ and $\ord_\xi(\ol s)=1$.
Let $\tau$ be the $R$-automorphism of $R'[C]$ of order two that is
induced by complex conjugation on~$R'$. Then $p_\eta:=s\cdot\tau(s)$
is a sum of two squares in $B[C]$, hence $p_\eta\in T_B$, and clearly
$p_\eta(\eta)=0$ and $\ord_\xi(\ol p_\eta)=2$.

When $\eta\in\interior(K_R)$, choose $s\in B[C]$ with $s(\eta)=0$ and
$\ord_\xi(\ol s)=1$, according to Lemma \ref{spezordx2x}(b). Then
$p_\eta:=s^2$ will do the job.

Now assume $\eta\in C(R)$ and $\eta\notin\interior(K_R)$. Then
necessarily $\xi$ is a boundary (or isolated) point of $K$, and
either $\eta=\xi$ or $\eta\notin K_R$. Since $T$ is saturated, there
exists $t\in T$ with $\ord_\xi(t)=1$ and with $t(\eta)\le0$. (The
second condition is automatic if $\xi$ is not an isolated point of
$K_R$.)
So $p_\eta:=t-t(\eta)$ lies in $T_B$ and has the desired properties.

To prove the additional claim in the case where $\eta=\xi$ is an
isolated point of $K$, fix a local orientation on $C(\R)$ around
$\xi$. Since $T$ is saturated, one can find $t_1\in T$ changing sign
from $+$ to $-$ in $\xi$, as well as $t_2\in T$ changing sign from
$-$ to $+$ in $\xi$, such that both have vanishing order $1$ in
$\xi$.
The proof of the lemma is complete.
\end{proof}

\begin{proof}[Proof of Theorem \ref{main}]
We have to show that
$T_R$ contains every $g\in R[C]$ with $g\ge0$ on $K_R$. It suffices
to prove that $T_B$ contains every $f\in B[C]$ with $f\ge0$ on $K_R$
and with $w(f)=0$. Indeed, given $g\in R[C]$ with $g\ge0$ on $K_R$,
we find $0\ne b\in R$ with $w(g)=v(b^2)$, and hence with $b^{-2}g\in
B[C]$ and $w(b^{-2}g)=0$. Knowing $b^{-2}g\in T_B$ clearly implies
$g\in T_R$.

So fix $f\in B[C]$ with $f\ge0$ on $K_R$ and with $w(f)=0$.
From Lemma \ref{psdinox} we know that $f\ge0$ on $X_K$, that is,
$f$ lies in the saturation of $T_B$ in $B[C]$. Since $T_B$ is
archimedean (Lemma \ref{archmodringerw}), we can apply the
archimedean local-global principle \ref{archlgp} to $f$ and $T_B$. By
this theorem, it suffices to prove, for every maximal ideal $M$ of
$B[C]$, that $f$ lies in $T_M$, the preordering generated by $T$ in
the local ring $B[C]_M$. To show this fix $M$, and let
$X_{K,M}:=X_K\cap\Sper B[C]_M$, where $\Sper B[C]_M$ is considered as
a subset of $\Sper B[C]$ in the natural way. So $X_{K,M}$ is the
basic closed constructible subset of $\Sper B[C]_M$ associated with
$T_M$.

If $f>0$ on $X_{K,M}$, then $f\in T_M$ by \cite{sch:lgp1}
Proposition~2.1. So we can assume that there exists $\beta\in X_K$
with $f\in\supp(\beta)\subset M$. The hypotheses of Lemma
\ref{spezreelrelat} apply to $M$, since $w(f)=0$ implies $\supp
(\beta)\not\subset\R[C]\otimes\m$. By Lemma \ref{spezreelrelat},
therefore, we have $M=M_\xi$ for some point $\xi\in K$.
Recall that
$$U(\xi)\>=\>\{\eta\in C(B')\colon\>\ol\eta=\xi\}.$$
We decompose the set of $R$-zeros of $f$ in $U(\xi)$ as
$$\{\eta\in U(\xi)\cap C(R)\colon\>f(\eta)=0\}\>=\>\{\eta_1,\dots,
\eta_r\}\cup\{\zeta_1,\dots,\zeta_s\}$$
in such a way that $\eta_1,\dots,\eta_r$ are interior points of
$K_R$, while $\zeta_1,\dots,\zeta_s$ are not. Note that $f$ has even
order in any of the points $\eta_i$.
Among the nonreal zeros of $f$ in $U(\xi)$, choose a
subset $\{\omega_1,\dots,\omega_t\}$ that contains exactly one
representative from each pair of complex conjugate points. Then put
$$p\ :=\ \prod_{i=1}^r(p_{\eta_i})^{\frac12\ord_{\eta_i}(f)}\cdot
\prod_{j=1}^s(p_{\zeta_j})^{\ord_{\zeta_j}(f)}\cdot\prod_{k=1}^t
(p_{\omega_k})^{\ord_{\omega_k}(f)},$$
where the $p_{\eta_i}$, $p_{\zeta_j}$, $p_{\omega_k}\in T_B$ are
chosen as in Lemma \ref{chirurg}.
Then $p$, being a product of elements of $T_B$, lies in $T_B$. By
Proposition \ref{nstordspez} we have
$$\ord_\xi(\ol f)\>=\>\sum_{i=1}^r\ord_{\eta_i}(f)+\sum_{j=1}^s\ord
_{\zeta_j}(f)+2\sum_{k=1}^t\ord_{\omega_k}(f).$$
This number is also equal to $\ord_\xi(\ol p)$.
It follows that $g:=\frac fp$ is a unit in the local ring $B[C]_M$.
In particular, $g$ has no zeros or poles in $U(\xi)$.

We would like $g$ to take positive values in all points $\eta\in
U(\xi)\cap K_R$.
This obviously is the case whenever $p(\eta)\ne0$. By continuity, it
is also true whenever $\eta$ is not an isolated point of $K_R$. The
remaining case when $\eta\in U(\xi)$ is an isolated point of $K_R$
can occur only for $\eta=\xi\in C(\R)$, and when $\xi$ is an isolated
point of $K$. If $g(\xi)<0$, we replace one of the local factors
$p_\xi$ in the definition of $p$ by $p'_\xi$, where $p'_\xi$ is
chosen as in Lemma \ref{chirurg}. If $p'$ denotes the modification of
$p$ obtained in this way, and $g'=f/p'$, we have achieved
$g'(\xi)>0$.

Using Lemma \ref{psdinox} we see that the unit $g$ of $B[C]_M$ takes
strictly positive values on the set $X_{K,M}$ associated with the
preordering $T_M$.
Hence, by another application of \cite{sch:lgp1} Proposition 2.1, we
conclude that $g$ lies in $T_M$. As a consequence, it follows that
$f=pg\in T_M$, as desired. The proof of Theorem \ref{main} is
complete.
\end{proof}


\section{Semidefinite representations in the compact
  case}\label{sect:sdp}%

Now we use moment relaxation to obtain semidefinite representations
from the results of the previous section.

\begin{thm}\label{sdp}%
Let $K\subset\R^n$ be a compact convex \sa\ set whose set $\Ex(K)$ of
extreme points has (\sa) dimension~$\le1$. Then $K$ has a
semidefinite representation. Such a representation can be obtained
from a suitable moment relaxation.
\end{thm}

\begin{proof}
The closure $K_0:=\ol{\Ex(K)}$ is a compact \sa\ set and satisfies
$\dim(K_0)\le1$. We have $K=\conv(K_0)$ by the Krein-Milman theorem.
We may assume that $K$ is not contained in any proper affine-linear
subspace of $\R^n$.

Let $I\subset\R[\x]=\R[x_1,\dots,x_n]$ be the ideal of polynomials
vanishing on $K_0$, and let $C_0=\Spec(\R[\x]/I)$, so $C_0$ is the
reduced Zariski closure of $K_0$ in $\A^n$. Then $C_0$ is an
$\R$-variety (possibly reducible) of dimension~$\le1$. Let $\pi\colon
C'_0\to C_0$ be the normalization of $C_0$. Note that $\pi$ is a
finite morphism, and that $C'_0$ is nonsingular. If $C_0$ has the
irreducible components $X_1,\dots,X_l$, and if we denote by $A'_i$
the integral closure of $\R[X_i]$ in its quotient field, the
coordinate ring of $C'_0$ is therefore $\R[C'_0]=A'_1\times\cdots
\times A'_l$.

The map $\pi\colon C'_0(\R)\to C_0(\R)$ on $\R$-points
may fail to be surjective. Indeed, when $\xi$ is an isolated point of
$C_0(\R)$ that lies on a one-dimensional irreducible component of
$C_0$, then $\xi\notin\pi(C'_0(\R))$.
To resolve this problem, let $\xi_1,\dots,\xi_k$ be the isolated
points of $C_0(\R)$ that lie in $K_0$ and that lie on one-dimensional
irreducible components of $C_0$, and write $P_i=\Spec(\R)$ for
$i=1,\dots,k$. Finally let
$$C_1\>=\>C'_0\>\amalg\>P_1\>\amalg\>\cdots\>\amalg\>P_k$$
(disjoint sum), and let $\phi\colon C_1\to C_0$ be the morphism with
$\phi|_{C'_0}=\pi$ and with $\phi(P_i)=\xi_i$ ($i=1,\dots,k$). Let
$K_1$ be the preimage of $K_0$ in $C_1(\R)$. Since $\pi$, and
therefore also $\phi$, is a finite morphism, the \sa\ set $K_1$ is
compact. By construction we have $\phi(K_1)=K_0$.
Since $C_1$ is nonsingular with irreducible components of dimension
$\le1$, the saturated preordering $\scrP(K_1)$ of $K_1$ in $\R[C_1]$
is finitely generated (see Theorem \ref{satpofg}). By the main result
of the previous section (Corollary \ref{mainstab}), the preordering
$\scrP(K_1)$ is stable. Note that $K_1$ is a basic closed set since
$\dim(K_1)\le1$ (see for instance \cite{abr}, VI.5.1 and III.3.1).

The morphism $\phi\colon C_1\to C_0\subset\A^n$ induces a
homomorphism $\varphi\colon\R[\x]\to\R[C_1]$ of the coordinate rings.
Since $K$ was assumed not to be contained in a proper affine-linear
subspace, the restriction of $\varphi$ to $L:=\spn(1,x_1,\dots,x_n)$
is injective.
We consider $L$ as a linear subspace of $\R[C_1]$. Let $\Sigma=\Sigma
\R[C_1]^2$, and choose $1=h_0,\,h_1,\dots,h_r\in\R[C_1]$ with
$\scrP(K_1)=h_0\Sigma+\cdots+h_r\Sigma$. Since $\scrP(K_1)$ is
stable, there exists a tuple $W=(W_0,\dots,W_r)$ of
finite-dimensional linear subspaces $W_i\subset\R[C_1]$ such that
$L\cap\scrP(K_1)$ is contained in
$$M_W\>=\>\Sigma_{W_0}+h_1\Sigma_{W_1}+\cdots+h_r\Sigma_{W_r},$$
see \ref{relaxrappl}. By Corollary \ref{relaxcpt} this implies
that we have found a semidefinite representation for $\conv
(\phi(K_1))=K$.
\end{proof}

\begin{example}\label{isolpktbsp}%
To illustrate the construction in the proof of Theorem \ref{sdp}, let
us consider the (rational) affine curve $C_0$ with equation
$$y^2+x^2(x-1)(x-2)\>=\>0.$$
The set $C_0(\R)\subset\R^2$ is compact and has the origin as an
isolated point.
To construct a semidefinite representation for the convex hull $K$ of
$C_0(\R)$, we work in $A_1=A'_0\times\R$, where $A'_0$ is the
integral closure of $A_0=\R[C_0]$, i.e.,
$$A'_0\>=\>\R[x,z]/\bigl(z^2+(x-1)(x-2)\bigr)$$
(where $y=xz$). Using the elements $1=(1,1)$, $u=(x,0)$, $v=(z,0)$
and $e=(1,0)$ of $A_1$, we let $L=\spn(1,u,uv)$, $W=\spn(1,e,u,v)$
and $U=WW=\spn(1,e,u,v,u^2,uv)$. The relaxation for $K$ obtained from
this data is exact. Using the basis $1-e,e,u,v$ for $W$, we get $K$
as the set of all $(\xi,\eta)\in\R^2$ for which there exist $a,b,c\in
\R$ with
$$\begin{pmatrix}1-c&0&0&0\\0&c&\xi&a\\0&\xi&b&\eta\\0&a&\eta&
3\xi-b-2c\end{pmatrix}\>\succeq\>0.$$
For the reader's convenience
we include the details of the argument: Since $v^2=-u^2+3u-2e$
we get, for
$$\mu=\mu_1+c\mu_e+x\mu_u+a\mu_v+b\mu_{u^2}+y\mu_{uv}\>\in U'$$
a general linear form, the matrix
$$M\>=\>M(x,y,a,b,c)\>=\>\begin{pmatrix}1-c&0&0&0\\0&c&x&a\\0&x&b&y\\
0&a&y&3x-b-2c\end{pmatrix}$$
with respect to the basis $1-e,\,e,\,v$ of $W$. This matrix represents
the pull-back of $\mu$ to a symmetric bilinear form on $W$, via the
product map $W\times W\to U$. Exactness of the relaxation is shown as
follows. Let $S=\{(\xi,\eta)\colon\ex a,b,c$ $M(\xi,\eta,a,b,c)
\succeq0\}$, let $K$ be the convex hull of $C_0(\R)$. The inclusion
$C_0(\R)\subset S$ is obvious. To prove $S\subset K$, let
$M(\xi,\eta,a,b,c)\succeq0$. Then $0\le c\le1$. Exploiting the
$2\times2$ minors $M_{23}$ and $M_{34}$ we get
$\eta^2+\xi^2(\xi^2-3\xi+2c)\le0$. This implies $(\xi,\eta)\in K$
when $c=0$ or $c=1$. Let $0<c<1$. Since the right bottom $3\times3$
submatrix of $M$ is homogeneous, we can scale with $\frac1c$ and get
$M(\frac\xi c,\frac\eta c,\frac ac,\frac bc,1)\succeq0$. By what was
just remarked we have $(\frac\xi c,\frac\eta c)\in K$, and hence
$(\xi,\eta)\in K$ as well.
\end{example}

\begin{rem}
It was already mentioned that the dimension hypothesis $\dim(K)\le1$
in Theorem \ref{sdp} is essential, according to \cite{sch:sdp}.
Similarly, this hypothesis is also essential for the stability result
Theorem \ref{mainstab}, from which Theorem \ref{sdp} was derived.
Indeed, there does not exist any compact \sa\ set $K\subset\R^n$ with
$\dim(K)\ge2$ such that the saturated preordering $\scrP(K)$ is
finitely generated and stable. This follows from the main result of
\cite{sch:stab}.
\end{rem}


\section{Semidefinite representations in the general
  case}\label{sect:noncpt}%

Using the compact case, we now establish semidefinite representations
for the closed convex hulls of arbitrary one-dimensional \sa\ sets,
and will deduce the dimension two case of the Helton-Nie conjecture.
I am indebted to Tim Netzer who showed me how to obtain semidefinite
representations for noncompact closed convex sets from such
representations for compact sets.

\begin{thm}\label{sdpclosed}%
Let $K\subset\R^n$ be the closed convex hull of a \sa\ set of
dimension $\le1$. Then $K$ has a semidefinite representation.
\end{thm}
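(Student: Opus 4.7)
The plan is to homogenize so as to apply Theorem~\ref{sdp} (the compact case) to an auxiliary one-dimensional \sa\ set in $\R^{n+1}$. Embed $\R^n$ into $\R^{n+1}$ via $x\mapsto(x,1)$, set $\hat S:=\{(s,1):s\in S\}$, and let $\hat K\subset\R^{n+1}$ denote the closed convex cone generated by $\hat S$. A direct check shows $\hat K\cap\{y_{n+1}=1\}=K\times\{1\}$: if $(x,1)=\lim_n\sum_k\lambda^{(n)}_k(s^{(n)}_k,1)$ with $\lambda^{(n)}_k\ge0$ and $s^{(n)}_k\in S$, the last coordinate forces $\sum_k\lambda^{(n)}_k\to1$, and normalizing exhibits $x$ as a limit of elements of $\conv(S)$, so $x\in K$. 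Hence it suffices to realize $\hat K$ as a projected spectrahedron and then recover $K$ as its affine slice at $y_{n+1}=1$.

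To produce a compact generator of $\hat K$, I project $\hat S$ radially onto the unit sphere: set $\sigma(v):=v/\|v\|$ and $T:=\ol{\sigma(\hat S)}\subset S^n\subset\R^{n+1}$. Because $S$ is \sa\ of dimension $\le1$, it has only finitely many branches escaping to infinity, and by o-minimal monotonicity each has a well-defined unit asymptotic direction $v$. Consequently $T$ differs from $\sigma(\hat S)$ only by finitely many equator points $(v,0)$, so $T$, and hence $T\cup\{0\}$, is a compact \sa\ subset of $\R^{n+1}$ of dimension $\le1$. Theorem~\ref{sdp} then yields a semidefinite representation
$$K^*\>:=\>\conv\bigl(T\cup\{0\}\bigr)\>=\>\Bigl\{x\in\R^{n+1}\colon\ex y\ \text{ with }\ M_0+\textstyle\sum_ix_iM_i+\sum_jy_jN_j\succeq0\Bigr\}.$$

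It remains to turn this representation of $K^*$ into one of $\hat K$. I claim $\hat K=\R_{\ge0}\cdot K^*$: the rays through $\hat S$ and through $\sigma(\hat S)$ coincide, the extra equator points of $T$ lie in $\hat K$ as limits of rays from $\hat S$, and $\R_{\ge0}\cdot K^*$ is closed because $K^*$ is compact and contains $0$; so both sides equal the closed convex cone generated by $T$. Homogenizing the LMI above then gives
$$\hat K\>=\>\Bigl\{x\in\R^{n+1}\colon\ex\lambda\ge0,\>y\ \text{ with }\ \lambda M_0+\sum_{i=1}^{n+1}x_iM_i+\sum_jy_jN_j\succeq0\Bigr\},$$
where the scalar constraint $\lambda\ge0$ is itself a $1\times1$ LMI block. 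This exhibits $\hat K$ as a projected spectrahedron, and intersecting with the affine hyperplane $\{x_{n+1}=1\}$ yields the desired representation of $K$.

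The principal technical point I anticipate is verifying this homogenized identity for $\hat K$. The stratum $\lambda>0$ recovers $\R_{>0}\cdot K^*$ by rescaling; the subtle issue is showing that the stratum $\lambda=0$ contributes nothing beyond $x=0$. This reduces to showing that the recession cone of the auxiliary spectrahedron $\{(x,y):M(x,y)\succeq0\}$ projects to $\{0\}$ in $\R^{n+1}$, which in turn follows because any such recession direction projects to a recession direction of $K^*$, and $K^*$ is compact. The only other step wanting care is the construction of $T$: one uses the finiteness of the ends of a one-dimensional \sa\ set together with the o-minimal monotonicity theorem to obtain the finitely many well-defined asymptotic directions.
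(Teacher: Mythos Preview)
Your strategy is essentially the paper's own: homogenize, pass to a compact base of the cone, invoke Theorem~\ref{sdp}, then re-homogenize the resulting LMI and slice. The paper takes an affine slice $K_1=K^h\cap\{\langle\cdot,w\rangle=1\}$ of the homogenized cone (with $w$ in the interior of the dual), whereas you radially project onto the sphere and take $K^*=\conv(T\cup\{0\})$; in spirit these are equivalent. Your direct bound $\dim T\le1$ is actually a bit slicker than the paper's route through Proposition~\ref{rcconvhull}.

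There is, however, a genuine gap. You assert that $\R_{\ge0}\cdot K^*$ is closed ``because $K^*$ is compact and contains $0$''. This is false in general: take $K^*$ to be the closed disk of radius $1$ centered at $(1,0)$; it is compact and contains $0$, yet $\R_{\ge0}\cdot K^*$ is the open half-plane $\{x>0\}$ together with the origin. The same phenomenon can occur with your $T\subset S^n$. For instance, with $T'=\bigl\{(1,t,t^2)/\lVert(1,t,t^2)\rVert:t\in[0,1]\bigr\}\cup\{(-1,0,0)\}\subset S^2$ one has $(0,1,0)=\lim_{\epsilon\to0}\tfrac1\epsilon\bigl[(1,\epsilon,\epsilon^2)+(-1,0,0)\bigr]\in\overline{\text{cone}(T')}$, but $(0,1,0)\notin\text{cone}(T')$ since the third coordinate forces all weight onto points with $t=0$. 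One can realize such $T'$ (up to adding another arc) as your $T$ for a suitable one-dimensional $S\subset\R^2$; in that example the homogenized LMI you write down represents only $\text{cone}(T)\subsetneq\hat K$. Your $\lambda=0$ analysis is correct, but it shows precisely that the homogenized set equals $\R_{\ge0}K^*=\text{cone}(T)$, not that it equals $\hat K$.

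The fix is easy and is exactly the reduction the paper makes at the outset: first assume $\rc(K)\cap(-\rc(K))=\{0\}$ (otherwise split off a line and lower the dimension). Then $\hat K$ is pointed, so there exists $w$ with $\langle w,t\rangle\ge\delta>0$ for all $t\in T$; for any conical representation $p=\sum_i\lambda_it_i$ one gets $\sum_i\lambda_i\le\langle w,p\rangle/\delta$, and a routine compactness argument now shows $\text{cone}(T)$ is closed, hence $\hat K=\R_{\ge0}K^*$ as you want. With this reduction inserted, your argument goes through.
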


\begin{lab}\label{convprep}%
Before we start the proof, we need to recall a few notions on convex
sets and cones, for which we refer to \cite{r}, Theorems 8.1 and 8.2.
Given a nonempty closed convex set $K\subset\R^n$, the
\emph{recession cone} of $K$ is
$$\rc(K)\>=\>\{x\in\R^n\colon\>K+x\subset K\},$$
and is a closed convex cone. Note that $\rc(K)$ can also be described
as the set of all existing limits $\lim_{\nu\to\infty}a_\nu x_\nu$ in
$\R^n$, where $x_\nu$ is a sequence in $K$ and $a_\nu$ is a null
sequence of positive real numbers.
The \emph{homogenization} $K^h$ of $K$ is the closure of the convex
cone $K^c=\{(t,tx)\colon t\ge0$, $x\in K\}$ in $\R\times\R^n=
\R^{n+1}$, and is described as
$$K^h\>=\>K^c\cup\{(0,y)\colon y\in\rc(K)\}.$$
The original set $K$ is recovered from its homogenization as
$K=\{x\in\R^n\colon(1,x)\in K^h\}$. The extreme rays of $K^h$ are
the rays spanned by points $(1,x)$ with $x\in\Ex(K)$, together with
the rays spanned by points $(0,y)$ where $\R_\plus y$ is an extreme
ray of $\rc(K)$.
\end{lab}

\begin{lab}\label{asydir}%
Let $S\subset\R^n$ be a \sa\ set. A ray $\R_\plus u$ (with $0\ne u\in
\R^n$) will be called an \emph{asymptotic direction of $S$ at
infinity} if there exist continuous \sa\ paths $a(t)$ in $\R$ and
$x(t)$ in $S$ (with $0<t\le1$) such that $a(t)>0$, $a(t)\to0$ and
$a(t)x(t)\to u$ for $t\to0$.
\end{lab}

\begin{prop}\label{rcconvhull}%
Let $S\subset\R^n$ be a nonempty closed \sa\ set, and let $K=
\ol{\conv(S)}$ be its closed convex hull.
\begin{itemize}
\item[(a)]
Each extreme point of $K$ is contained in $S$.
\item[(b)]
Each extreme ray of $\rc(K)$ is an asymptotic direction of $S$ at
infinity.
\end{itemize}
\end{prop}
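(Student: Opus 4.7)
The plan is to treat (a) and (b) uniformly by passing to the homogenization $K^h\subset\R^{n+1}$, which equals the closure of the convex cone generated by $\tilde S:=\{(1,s):s\in S\}$. Via the bijection between extremal rays of $K^h$ and the two types of extremal objects recalled in~\ref{convprep}, part~(a) amounts to showing that each extremal ray $\R_\plus(1,x)$ of $K^h$ is generated by some $(1,s)$ with $s\in S$, and part~(b) to showing that each extremal ray $\R_\plus(0,u)$ of $K^h$ arises from an asymptotic direction of $S$. Both statements then follow from a Milman-type result about extremal rays of $K^h$, supplemented in case~(b) by the curve selection lemma.

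Normalize the generators to unit vectors: set $B:=\{(1,s)/\sqrt{1+|s|^2}:s\in S\}\subset S^n$. The closure $\ol B\subset S^n$ is compact, and the complement $\ol B\setminus B$ consists precisely of points $(0,w)$ with $w\in\R^n$ a unit vector of the form $w=\lim_k s_k/|s_k|$ for some sequence $s_k\in S$ with $|s_k|\to\infty$; such $w$ are exactly the normalized asymptotic directions of $S$ in the sense of~\ref{asydir}. Since $B$ spans the same rays as $\tilde S$, we still have $K^h=\ol{\R_\plus\conv(\ol B)}$.

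Assume first that $K^h$ is pointed. Choose $p_0$ in the nonempty interior of the dual cone $(K^h)^*$ and let $D:=K^h\cap\{\langle\cdot,p_0\rangle=1\}$, a compact convex base of $K^h$. The map $b\mapsto b/\langle b,p_0\rangle$ sends $\ol B$ to a compact subset $D'\subset D$ satisfying $D=\ol{\conv(D')}$, which is a direct check from $K^h=\ol{\R_\plus\conv(\ol B)}$ together with the fact that $\langle\cdot,p_0\rangle$ is strictly positive on $K^h\setminus\{0\}$. Milman's converse to Krein--Milman then yields $\Ex(D)\subset D'$; since extremal points of the compact base $D$ correspond bijectively to extremal rays of $K^h$, every such ray is generated by a point of $\ol B$. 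Rays with positive first coordinate therefore lie in $B$ and yield~(a); rays with vanishing first coordinate come from $\ol B\setminus B$ and provide, by the description above, sequences $a_k\to0^+$ and $s_k\in S$ with $a_ks_k\to u$, the sequential form of~(b).

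To complete~(b), apply the curve selection lemma to the \sa\ set $A:=\{(c,cs)\in\R\times\R^n:c>0,\,s\in S\}$. The previous step gives $(0,u)\in\ol A$, so curve selection furnishes a continuous \sa\ path $\gamma(t)=(a(t),b(t))$ with $\gamma(0)=(0,u)$ and $\gamma(t)\in A$ for $t>0$; setting $x(t):=b(t)/a(t)\in S$ then delivers the pair of paths required by~\ref{asydir}. The principal technical point is the pointedness assumption on $K^h$: if it fails, $\rc(K)$ contains a nonzero linear subspace, but the rays relevant to~(a) and~(b) all avoid the lineality space of $K^h$---an extremal ray $\R_\plus u$ of $\rc(K)$ cannot have $-u\in\rc(K)$, else $u=(2u)+(-u)$ would split $u$ off its own ray---and one reduces to the pointed case by quotienting out that lineality space.
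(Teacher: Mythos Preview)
Your argument is correct and follows a genuinely different route from the paper's. The paper stays in $\R^n$ throughout: for~(a) it represents a given $\xi\in K$ by continuous \sa\ paths $a_i(t)\in[0,1]$, $x_i(t)\in S$ (via Carath\'eodory and curve selection) with $\sum_ia_i(t)x_i(t)\to\xi$, argues that each summand $a_i(t)x_i(t)$ stays bounded (this is where the pointedness hypothesis $\rc(K)\cap(-\rc(K))=\{0\}$ is used), and splits the limit as $\xi=y+u$ with $y\in\conv(S)$ and $u\in\rc(K)$; part~(b) is a parallel path computation. Your version instead compactifies the generating rays on the sphere and invokes Milman's converse on a compact base of $K^h$, which handles both parts at once and then only needs curve selection at the end to upgrade the sequential conclusion for~(b) to the path form required by~\ref{asydir}. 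The paper's method is more hands-on and self-contained, and it incidentally yields the sharper statement that every $\xi\in K\setminus\conv(S)$ decomposes as $y+u$ as above; your method is structurally cleaner and makes the common source of~(a) and~(b) transparent via the extremal-ray description of $K^h$ in~\ref{convprep}. One minor point: your closing remark about quotienting out the lineality space is more than is needed---when $K^h$ fails to be pointed it has no extremal rays whatsoever, so $\Ex(K)=\emptyset$ and $\rc(K)$ has no extremal rays either, and both assertions are vacuous; no further reduction is required.
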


Without the hypothesis that $S$ is \sa, assertion (a) remains
certainly true as long as $S$ is bounded, but we are not sure about
the general case.

\begin{proof}
For the proof of both parts we can assume $\rc(K)\cap(-\rc(K))=
\{0\}$. (Otherwise $K$ contains a line, which implies that $\Ex(K)=
\emptyset$ and $\rc(K)$ has no extreme ray.)
We are first going to show that for any $\xi\in K$ there exists
$u\in\rc(K)$ with $\xi-u\in\conv(S)$; note that this implies
$\Ex(K)\subset\conv(S)$, and hence~(a).
Let $\xi\in K$. By the curve selection lemma and by Carath\'eodory's
lemma, there exist
continuous \sa\ paths $a_i(t)$ in $[0,1]$ and $x_i(t)$ in $S$, for
$i=0,\dots,n$ and $0<t\le1$, such that $\sum_{i=0}^na_i(t)\equiv1$,
and such that
\begin{equation}\label{limpath}%
x(t)\>=\>\sum_{i=0}^na_i(t)x_i(t)
\end{equation}
converges to $\xi$ for $t\to0$. Note that the limit $\alpha_i:=\lim
_{t\to0}a_i(t)$ exists in $[0,1]$ for every $0\le i\le n$ since the
functions $a_i(t)$ are \sa, and that $\sum_i\alpha_i\equiv1$.

We claim that the curves $a_i(t)x_i(t)$ ($i=0,\dots,n$) are bounded
for $t\to0$. Indeed, assume that $a_i(t)x_i(t)$ is unbounded for at
least one index~$i$. Since the $a_i(t)x_i(t)$ have Puiseux Laurent
expansions in $t$ for small $t>0$, we see that there exists a minimal
rational number $q>0$
such that, for every $0\le i\le n$, the curve $t^qa_i(t)x_i(t)$ is
bounded and therefore the limit $u_i:=\lim_{t\to0}t^qa_i(t)x_i(t)$
exists in $\R^n$.
Then $u_i\in\rc(K)$ for every $i$,
and $u_i\ne0$ for at least one index~$i$.
Multiplying \eqref{limpath} with $t^q$ shows $\sum_{i=0}^nu_i=0$,
contradicting $\rc(K)\cap(-\rc(K))=\{0\}$.

So the curves $a_i(t)x_i(t)$ are all bounded. Hence the limits $u_i:=
\lim_{t\to0}a_i(t)x_i(t)$ exist in $\R^n$. If $x_i(t)$ is unbounded
then $\alpha_i=0$ and $u_i\in\rc(K)$.
If $x_i(t)$ is bounded then $\xi_i=\lim_{t\to0}x_i(t)$ exists in $S$,
and $u_i=\alpha_i\xi_i$. Let $y$ denote the sum of the $u_i$ for
those indices $i$ for which $x_i(t)$ is bounded, and let $u$ be the
sum of the remaining $u_i$. Then $y\in\conv(S)$,
$u\in\rc(K)$ and $\xi=y+u$. This proves our assertion, and hence~(a).

The proof of (b) is similar. After making a
translation we can assume $0\in K$. Let $0\ne u\in\rc(K)$. Similar to
\eqref{limpath} we have
$$\frac1tu-w(t)\>=\>\sum_{i=0}^na_i(t)x_i(t)$$
($0<t\le1$) with \sa\ paths $a_i(t)$ in $[0,1]$ and $x_i(t)$ in $S$,
where $\sum_ia_i(t)\equiv0$ and $w(t)$ is a correction term with
$|w(t)|<1$.
Multiplication with $t$ gives
$$u-tw(t)\>=\>\sum_{i=0}^nta_i(t)x_i(t).$$
For $t\to0$, the summands on the right remain bounded, as shown
above.
Therefore the limit $u_i=\lim_{t\to0}ta_i(t)x_i(t)$ exists in $\R^n$
for $i=0,\dots,n$, and $\R_\plus u_i$ is an asymptotic direction of
$S$ at infinity (see \ref{asydir}) if $u_i\ne0$.
From $u=\sum_iu_i$ we see that if $\R_\plus u$ is an extreme ray of
$\rc(K)$, then $\R_\plus u=\R_\plus u_i$ for some $i$, which proves
(b).
\end{proof}

\begin{lab}\label{pfsdpclosed}%
We now give the proof of Theorem \ref{sdpclosed}. Let $S\subset\R^n$
be a nonempty \sa\ set of dimension at most one, and let $K=
\ol{\conv(S)}$ be its closed convex hull. In order to prove that $K$
has a semidefinite representation we may assume $\rc(K)\cap(-\rc(K))
=\{0\}$. (Indeed, $U=\rc(K)\cap(-\rc(K))$ is a linear subspace of
$\R^n$, and $K+U\subset K$. If $\pi\colon\R^n\to\R^n/U$ is the
quotient map, then $\pi(K)=\ol{\pi(S)}$, and the recession cone $R$
of $\pi(K)$ satisfies $R\cap(-R)=\{0\}$. A semidefinite
representation for $\pi(K)$ immediately gives one for $K$.)
For the homogenization $K^h\subset\R^{n+1}$ of $K$ (see
\ref{convprep}) this implies $K^h\cap(-K^h)=\{0\}$.
So the dual cone $(K^h)^*$ of $K^h$ in $\R^{n+1}$ is
full-dimensional,
and we can pick an interior point $w$ of $(K^h)^*$. The convex set
$$K_1\>:=\>\{x\in K^h\colon\>\bil xw=1\}$$
is compact,
and $K^h$ is (isomorphic to) the homogenization of $K_1$. Indeed,
since $\bil yw>0$ for $0\ne y\in K^h$, we have $K^h=\{tx\colon
x\in K_1$, $t\ge0\}$, and the right hand set is closed, hence equal
to $K_1^h$.

The extreme rays of the convex cone $K^h$ correspond to the extreme
points of $K$ and to the extreme rays of $\rc(K)$, see
\ref{convprep}. By Proposition \ref{rcconvhull}(a), $\Ex(K)\subset
\ol S$ has dimension $\le1$. The set $S$ has only finitely many
asymptotic directions at infinity since $\dim(S)\le1$, and so
$\rc(K)$ has only finitely many extreme rays by \ref{rcconvhull}(b).
Considering the set of extreme rays of $K^h$ as a subset of the unit
sphere in $\R^{n+1}$, this set therefore has dimension $\le1$. It
follows that the set $\Ex(K_1)$ of extreme points of $K_1$ has
dimension $\le1$ as well. So we can apply Theorem \ref{sdp} to $K_1$,
and conclude that $K_1$ has a semidefinite representation. By Lemma
\ref{naivconesdp} below, this implies that the cone $(K_1)^c=(K_1)^h
\cong K^h$ (first equality holds since $K_1$ is compact)
has a semidefinite representation as well. This completes the proof
of Theorem \ref{sdpclosed}, since $K$, being an affine-linear section
of $K^h$, also has a semidefinite representation.
\end{lab}

\begin{lem}\label{naivconesdp}%
Let $K\subset\R^n$ be a convex set. If $K$ has a semidefinite
representation, the same is true for the convex cone $K^c\subset
\R\times\R^n$ (see \ref{convprep}).
\end{lem}

\begin{proof}
This is certainly well-known: Assume
$$K\>=\>\{x\in\R^n\colon\ex y\in\R^m\ A+M(x)+N(y)\succeq0\}$$
where $M(x)$, $N(y)$ are linear systems of
symmetric matrices. Then $K^c$ is the set of all $(t,x)\in
\R\times\R^n$ for which there is $(s,y)\in\R\times\R^m$ with
$$tA+M(x)+N(y)\>\succeq\>0,\quad\begin{pmatrix}t&x_i\\x_i&s
\end{pmatrix}\>\succeq\>0\quad(i=1,\dots,n).$$
\end{proof}

The proof of Theorem \ref{sdpclosed} is therefore complete. We can
easily extend the theorem to closed conic hulls:

\begin{cor}\label{closedconichull}%
Let $S\subset\R^n$ be a \sa\ set, let $S_1:=\{\frac x{|x|}\colon
0\ne x\in S\}$ be its radial projection to the $(n-1)$-sphere. If
$\dim(S_1)\le1$ then the closed conic hull $\ol{\cone(S)}$ of $S$ has
a semidefinite representation.
\end{cor}

\begin{proof}
Here $\cone(S)$, the convex cone generated by $S$, consists of all
finite linear combinations of elements of $S$ with non-negative
coefficients. For the proof consider $K:=\ol{\conv(S_1)}$, a compact
convex set in $\R^n$ that has a semidefinite representation by
Theorem \ref{sdpclosed}. By Lemma \ref{naivconesdp}, the cone
$K^c=K^h\subset\R\times\R^n$ of $K$ has a semidefinite representation
as well. Since $\ol{\cone(S)}$ is the closure of the projection of
$K^c$ to $\R^n$, the assertion of the corollary follows.
\end{proof}

Now we combine Theorem \ref{sdpclosed} with results of Netzer to
show:

\begin{thm}\label{hndim2}%
\emph{(Helton-Nie conjecture in dimension two)}
Every convex \sa\ subset of $\R^2$ has a semidefinite
representation.
\end{thm}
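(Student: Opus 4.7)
The plan is to reduce the statement to Theorem \ref{sdpclosed} in two steps: first handle the case where $K$ is closed, by exhibiting any closed convex semialgebraic subset of $\R^2$ as the closed convex hull of a semialgebraic set of dimension at most one (or as a trivial spectrahedron), and then pass from $\ol K$ to $K$ itself by invoking the results of Netzer to which the introduction of this section alludes.

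For the closed case, let $K\subset\R^2$ be closed, convex and \sa. If $K$ contains a line, then in $\R^2$ its lineality space forces $K$ to be a line, a strip between two parallel lines, a half-plane, or all of $\R^2$; each of these is described by at most two linear inequalities and is a spectrahedron on the nose. If $K$ is line-free and $\dim K\le 1$, then $K$ itself is a closed \sa\ set of dimension $\le 1$ with $K=\ol{\conv(K)}$, so Theorem \ref{sdpclosed} applies. If $K$ is line-free with nonempty interior, then $\partial K$ is a closed \sa\ set of dimension $\le 1$, and I claim $K=\ol{\conv(\partial K)}$: through any interior point $p$ of $K$ there must exist a line $\ell$ with $\ell\cap K$ a bounded segment whose endpoints lie in $\partial K$, for otherwise every line through $p$ would meet $K$ in a ray, forcing for every direction $u\in S^1$ that $u$ or $-u$ lies in $\rc(K)$; but then the arc of $S^1$ cut out by $\rc(K)$ would have length $\ge\pi$, which in $\R^2$ forces $\rc(K)$ to contain a full line, contradicting line-freeness. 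Hence $p\in\conv(\partial K)$, and Theorem \ref{sdpclosed} supplies a semidefinite representation of $K$.

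For an arbitrary convex \sa\ set $K\subset\R^2$, the previous step produces a semidefinite representation of $\ol K$, and it remains to pass from $\ol K$ to $K$ itself. I would invoke Netzer's results here, citing them as a black box: the relative boundary of $\ol K$ decomposes into finitely many relatively open \sa\ arcs and finitely many isolated points, each of which may or may not belong to $K$; each arc is a one-dimensional convex \sa\ set whose closure is again covered by Theorem \ref{sdpclosed}, and Netzer's construction assembles semidefinite representations of these pieces together with the representation of $\ol K$ into one for $K$. I expect this final assembly step to be the main obstacle to a self-contained proof; it is precisely the ingredient that the paper defers to Netzer and treats as external input.
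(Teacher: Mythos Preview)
Your closed case is correct and a touch more direct than the paper's. The paper handles a line-free closed $K$ via the Minkowski decomposition $K=\ol{\conv\Ex(K)}+\rc(K)$ (Rockafellar, Theorem 18.5), applying Theorem \ref{sdpclosed} to the first summand and treating $\rc(K)$ as an obvious polyhedral cone. You instead show $K=\ol{\conv(\partial K)}$ by the line-through-an-interior-point argument and apply Theorem \ref{sdpclosed} to $\partial K$ directly. Your route avoids the Minkowski sum and the (mild) issue of verifying that $\Ex(K)$ is \sa; the paper's route decouples bounded from asymptotic behaviour more visibly.

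The general case, however, is where your proposal is genuinely incomplete, and the paper does more than hand the problem to Netzer. Writing $M=\partial\ol K\setminus K$, the paper separates $M$ into (i) the relative interior $M_0$ of $M\cap\Ex(\ol K)$ inside $\partial\ol K$, (ii) finitely many extremal points $\xi$ of $\ol K$, and (iii) the pieces $M_F=M\cap F$ lying on the one-dimensional faces $F$ of $\ol K$. Accordingly $K$ is written as a finite \emph{intersection} of $K_0=\ol K\setminus M_0$, the sets $K_\xi=\ol K\setminus\{\xi\}$, and the sets $K_F$ (an open half-plane together with a convex subset of its bounding line). Three different ingredients then apply: \cite{nt} Proposition 3.1 to each $K_\xi$, \cite{ns} to each $K_F$, and \cite{nt} Theorem 3.8 (the $(T\lal\ol K)$ construction, with $T=\ol{\conv(\partial\ol K\setminus M_0)}$ produced by a second use of Theorem \ref{sdpclosed}) to $K_0$. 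Your sketch does not set up this intersection structure, and the phrase ``each arc is a one-dimensional convex \sa\ set'' is off: arcs on $\partial\ol K$ are generally not convex subsets of $\R^2$. You are right that this assembly is the main obstacle, but the paper actually carries it out rather than deferring it as a black box.
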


\begin{proof}
Let $K\subset\R^2$ be a convex \sa\ set. To prove that $K$ has a
semidefinite representation, we first consider the case when $K$ is
closed. If $K$ contains a line, the assertion is obvious by reduction
to a (closed) convex subset of $\R$.
So we assume $\rc(K)\cap(-\rc(K))=\{0\}$. Then $K=\conv\Ex(K)+\rc(K)=
\ol{\conv\Ex(K)}+\rc(K)$ (Minkowski sum, \cite{r} Theorem 18.5),
and the set $\Ex(K)$ is \sa\ of dimension $\le1$. By Theorem
\ref{sdpclosed}, $\ol{\conv\Ex(K)}$ is sdp-representable.
Since $\rc(K)$ is clearly sdp-representable, being a closed convex
cone in $\R^2$,
we see that $K$ is sdp-representable as well.

Now let $K\subset\R^2$ be an arbitrary convex \sa\ set. We can assume
that $K$ has nonempty interior. Let $M$ be the set of points in the
boundary $\partial K=\partial\ol K$ that do not lie in $K$.
Then $M$ is a \sa\ set with $\dim(M)\le1$, and we can decompose $M$
set-theoretically as follows. Let $M_0$ be the relative topological
interior of $M\cap\Ex(\ol K)$ inside $\partial\ol K$, and let $\scrF$
be the set of one-dimensional faces of $\ol K$. The supporting line
of every $F\in\scrF$ is an irreducible component of the Zariski
closure of $\partial K$. Therefore the set $\scrF$ is finite.
For each $F\in\scrF$, let $M_F=F\cap M$. Moreover, let $H_F$ be the
open halfplane with $H_F\cap K\ne\emptyset$ whose boundary line
contains $F$, and let $K_F=H_F\cup(F\cap K)=H_F\cup(\ol H_F\cap K)$.
Then $M$ is the union of $M_0$ with finitely many extreme points of
$\ol K$ and with $\bigcup_{F\in\scrF}M_F$.
Accordingly, $K$ is the intersection of $K_0:=\ol K\setminus M_0$
with finitely many sets $K_\xi:=\ol K\setminus\{\xi\}$ (where $\xi\in
\Ex(\ol K)$) and with the sets $K_F$ ($F\in\scrF$).

Since a finite intersection of sdp-representable sets is again
sdp-representable, it suffices to show that each of $K_0$, $K_\xi$
and $K_F$ as above is sdp-representable. Each of the
sets $K_F$ is a union of an open halfplane $H$ with a convex subset
of the line $\partial H$. Using the result of Netzer and Sinn
\cite{ns}, such $K_F$ has a semidefinite representation. (Due to the
elementary nature of this situation, one can easily find an explicit
such representation directly.)
The sets $K_\xi$ ($\xi\in\Ex(\ol K)$) have semidefinite
representations by \cite{nt} Proposition 3.1.
For $K_0$ we employ Netzer's construction from \cite{nt}. Let $N=
\partial\ol K\setminus M_0$, a closed subset of $\partial\ol K$ with
$K_0=\interior(K)\cup N$, and let $T=\ol{\conv(N)}$. Then $T$ is a
closed convex subset of $\ol K$, and is sdp-representable by Theorem
\ref{sdpclosed}. By construction, and by Proposition
\ref{rcconvhull}, $T\cap\partial\ol K=N=\partial\ol K\setminus M_0$.
Using the notation introduced in \cite{nt}, let $(T\lal\ol K)$ denote
the union of the relative interiors of all the faces of $\ol K$ that
meet $T$. We see that $(T\lal\ol K)=\interior(K)\cup N=K_0$. By
\cite{nt} Theorem 3.8, $(T\lal\ol K)$ is sdp-representable,
which proves our theorem.
\end{proof}

Homogenizing, we see that the Helton-Nie conjecture holds for convex
cones in $\R^3$:

\begin{cor}\label{hnconer3}%
Every \sa\ convex cone $C\subset\R^3$ has a semidefinite
representation.
\end{cor}

\begin{proof}
We may assume $C\cap(-C)=\{0\}$.
In fact we easily reduce to the case where $C\ne\{0\}$ and there
exists $u\in\R^3$ with $\bil ux>0$ for every $0\ne x\in C$.
Let $L:=\{x\in\R^3\colon\bil ux=1\}$. Then $K:=C\cap L$ has a
semidefinite representation by Theorem \ref{hndim2}. Since
$C=\{tx\colon x\in K$, $t\ge0\}$ is a linear image of the cone $K^c$
and $K^c$ has a semidefinite representation by Lemma
\ref{naivconesdp}, we are done.
\end{proof}


\section{Stability in the virtually compact case}%
\label{sect:virtcpt}%

\begin{lab}\label{virtcptrappl}%
Let $C$ be an irreducible affine curve over $\R$, and let $K\subset
C(\R)$ be a closed \sa\ subset. Adopting the terminology of
\cite{sch:mz}, \cite{sch:guide}, we say that $K$ is \emph{virtually
compact} if there exists a nonconstant regular function $f\in\R[C]$
that is bounded on $K$. Equivalently, $K$ is virtually compact if and
only if there exists an irreducible affine curve $C_1$ containing $C$
as a Zariski open subset, in such a way that the points in
$C_1\setminus C$ are nonsingular on $C_1$ and the closure $K_1$ of
$K$ in $C_1(\R)$ is compact.

When the affine curve $C$ is not necessarily irreducible, a closed
\sa\ set $K\subset C(\R)$ is called virtually compact if $K\cap
C'(\R)$ is virtually compact on $C'$, for every irreducible component
$C'$ of~$C$. A closed \sa\ set $K\subset\R^n$ of dimension $\le1$ is
called virtually compact if it has this property with respect to its
Zariski closure $C$.
\end{lab}

\begin{examples}\label{virtcptex}%
A closed \sa\ set $K\subset\R$ is virtually compact only if it is
compact. For more interesting examples let $C$ be an irreducible
plane curve with equation $f(x,y)=0$. If the highest degree
homogeneous part of $f$ has a nonreal linear factor, then every
closed \sa\ set $K\subset C(\R)$ is virtually compact.
For yet another class of examples consider plane curves $C$ with
equation $y^2=p(x)$, where $p\in\R[x]$ is monic and separable with
$\deg(p)=d$.
If $d=2$ or $d$ is odd, only compact sets $K\subset C(\R)$ are
virtually compact.
If $d\equiv0$ (mod~$4$), then $K\subset C(\R)$ is virtually compact
iff $K$ is contained in the union of a bounded set with either the
upper or the lower halfplane. If $d\equiv2$ (mod~$4$), $d\ge6$, a
similar characterization holds with upper or lower halfplanes
replaced by the unions of diagonally opposite quadrants.
\end{examples}

We show that the analogues of the stability results from Section
\ref{sect:main} remain true for virtually compact sets $K$:

\begin{thm}\label{vcptmain}%
Let $C$ be an irreducible nonsingular affine curve over $\R$, and let
$K\subset C(\R)$ be a closed \sa\ set that is virtually compact. Then
the saturated preordering $\scrP(K)$ in $\R[C]$ is finitely generated
and stable.
\end{thm}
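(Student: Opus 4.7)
The plan is to reduce Theorem~\ref{vcptmain} to the compact case, Corollary~\ref{mainstab}. Since finite generation and stability both behave well under finite direct products, and virtual compactness is an irreducible-component-wise property (see~\ref{virtcptrappl}), I first reduce to the case that $C$ is irreducible. By the definition of virtually compact, there exists a nonsingular irreducible affine curve $C_1$ containing $C$ as a Zariski open subset, with finite complement $D:=C_1\setminus C$ of nonsingular points of~$C_1$, such that the Euclidean closure $K_1$ of $K$ in $C_1(\R)$ is compact. Applying Theorem~\ref{satpofg} and Corollary~\ref{mainstab} to $(C_1,K_1)$ yields a finite set of generators $1=h_0,h_1,\dots,h_r\in\R[C_1]$ for the saturated preordering $T_1:=\scrP(K_1)$, together with stability of~$T_1$ in $\R[C_1]$.

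The bridge from $T_1$ to $\scrP(K)$ proceeds via a choice of denominator. Suppose $d\in\R[C_1]$ vanishes on~$D$ and is a unit of $\R[C]$, i.e.\ $V(d)=D$ over the algebraic closure. For any $f\in\R[C]$ whose pole orders at the points of~$D$ are bounded by~$m$, the product $d^{2n}f$ lies in $\R[C_1]$ as soon as $n\ge m$. If $f\ge0$ on $K$, then $d^{2n}f\ge0$ on $K$; and since $K$ is dense in $K_1$ by construction and $d^{2n}f$ is continuous on $C_1(\R)$, the inequality extends to~$K_1$, whence $d^{2n}f\in T_1$. Writing $d^{2n}f=\sum_{i,j}p_{ij}^2h_i$ with $p_{ij}\in\R[C_1]$ then gives $f=\sum_{i,j}(p_{ij}/d^n)^2h_i$ in $\R[C]$, valid because $d^{-1}\in\R[C]$; thus $\scrP(K)$ is generated as a preordering in $\R[C]$ by $h_0,\dots,h_r$. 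For stability, given a finite-dimensional $L\subset\R[C]$, pick $n$ large enough that $L':=d^{2n}L$ is a finite-dimensional subspace of $\R[C_1]$; stability of $T_1$ applied to~$L'$ provides finite-dimensional $W'_i\subset\R[C_1]$ with $L'\cap T_1\subset\sum_i\Sigma_{W'_i}h_i$, and then $W_i:=d^{-n}W'_i\subset\R[C]$ witness stability of $\scrP(K)$ at~$L$.

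The principal technical obstacle is the construction of~$d$: its existence amounts to the triviality of the class $[D]\in\Pic(C_1)$, which can genuinely fail on curves of positive genus. This is circumvented by enlarging~$D$: one adjoins a finite set $D''\subset C_1\setminus K_1$ of closed points, disjoint from~$D$ and $K_1$, such that $[D\cup D'']=0$ in $\Pic(C_1)$. Such $D''$ is available because $-[D]\in\Pic(C_1)$ admits an effective representative whose support can be placed outside the compact set $K_1$. With $d$ chosen so that $V(d)=D\cup D''$, the preceding argument goes through on the open subcurve $C_1\setminus(D\cup D'')\supset K$, and a degree-tracking transfer back to $\R[C]$, using that the added points lie outside~$K$, completes the proof.
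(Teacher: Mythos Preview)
Your overall strategy---reduce to the compact pair $(C_1,K_1)$ and clear denominators by multiplying $f$ by the square of an element of $\R[C_1]$---is the same as the paper's. The difference lies in how the division step is handled, and this is where your argument has a genuine gap.

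Your ideal scenario is a $d\in\R[C_1]$ with $V(d)=D$, so that $d$ is a unit in $\R[C]$ and division by $d^{2n}$ is harmless. You correctly observe that the Picard obstruction may prevent this, and you propose to enlarge $D$ to $D\cup D''$ with $D''$ outside $K_1$. But once $d$ acquires extra zeros on $C$, the resulting $q_{ij}=p_{ij}/d^n$ lie only in $\R[C']$ with $C'=C\setminus D''$, not in $\R[C]$. Your ``degree-tracking transfer back to $\R[C]$'' does not close this gap: from $d^{2n}f=\sum_{i,j}p_{ij}^2h_i$ one cannot conclude that $p_{ij}$ vanishes to order $\ge n$ at a point $Q\in D''$. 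If $Q$ is non-real this fails outright, since over $\C$ sums of squares can cancel (e.g.\ $p_1(Q)^2+p_2(Q)^2=0$ with $p_1(Q),p_2(Q)\ne 0$). If $Q$ is real but lies outside $K$, some $h_i(Q)$ may be negative, and again cancellation is possible. So the representation you obtain need not live in $\R[C]$, and you have only proved the result for the smaller ring $\R[C']$.

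The paper avoids this by making the opposite choice: rather than pushing the auxiliary zeros away from $K$, it uses a Jacobian argument on the projective model $\wt C$ to place them at \emph{interior real points} of $K$ (avoiding the zeros of the $h_i$). At such a point every summand $p_{ij}^2h_i$ is nonnegative, so the vanishing of the sum to order $2n$ forces each summand to vanish to order $\ge 2n$ (this is the content of the divisibility lemma \cite{sch:tams}, Lemma~0.1), and then $h\mid p_{ij}$ in $\R[C]$. This is precisely the mechanism your sketch is missing. Note also that the paper proves stability by running the identical construction over an arbitrary real closed $R\supset\R$ (with the multiplier $h$ still chosen over $\R$) and invoking Theorem~\ref{main} for $(C_1,K_1)$, rather than by quoting stability of $T_1$ and transporting degree bounds as you do; your route would work too, but only once the division step is repaired.
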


\begin{proof}
That $\scrP(K)$ is finitely generated was already proved (in greater
generality) in \cite{sch:mz} Theorem 5.21. We are going to reprove
this fact here using a different reasoning, because we'll need the
same argument to prove stability. Since Theorem \ref{vcptmain} has
already been proved when $K$ is compact, we can assume that $K$ is
not compact. In particular, the set $K$ is infinite.

Let $C_1$ and $K_1$ be as in \ref{virtcptrappl}. Then $C_1$ is a
nonsingular irreducible affine curve containing $C$ as a Zariski open
set, and the closure $K_1$ of $K$ in $C_1(\R)$ is compact. Note that
$\R[C_1]$ is a
subring of $\R[C]$. Let $T=\scrP_C(K)$, the saturated preordering of
$K$ in $\R[C]$, and let $T_1=\scrP_{C_1}(K_1)$, the saturated
preordering of $K_1$ in $\R[C_1]$. Since $K_1$ is compact, the
preordering $T_1$ in $\R[C_1]$ is finitely generated according to
Theorem \ref{satpofg}.
So there are nonzero elements $1=h_0,\,h_1,\dots,h_r\in\R[C_1]$ that
generate $T_1$ as a quadratic module in $\R[C_1]$. (We can even do
with $r\le2$, see the remarks after Theorem \ref{satpofg}.) We'll
prove that $T=\scrP_C(K)$ is generated by $h_0,\dots,h_r$ as a
quadratic module in $\R[C]$.

Let $\wt C$ be the nonsingular projective curve over $\R$ that
contains $C_1$ as an open dense subscheme. We consider Weil divisors
on $\wt C$, and regard them as conjugation-invariant Weil divisors on
the complexified curve $\wt C_\C$. Since $\wt C(\R)\ne\emptyset$,
we have $\Pic(\wt C)=\Pic(\wt C_\C)^{\Gal(\C/\R)}$.
Let $J$ be the Jacobian variety of $\wt C$, an abelian variety over
$\R$.

Let $C_1(\C)\setminus C(\C)=\{Q_1,\dots,Q_s\}$,
and let $0\ne f\in
\R[C]$ with $f|_K\ge0$. For $i=1,\dots,s$ let $m_i\ge0$ be an integer
satisfying $2m_i+\ord_{Q_i}(f)\ge0$. Consider the divisor
$$D\>=\>\sum_{i=1}^sm_iQ_i$$
on $\wt C$. Choose a point $Q\in\wt C(\C)\setminus C_1(\C)$, and let
$E=Q+\ol Q$ (again a divisor on $\wt C$, the case $Q=\ol Q$ is
allowed, bar denoting complex conjugation). There exist integers
$l$, $n\ge1$ such that the divisor $nE-lD$ has degree zero, and such
that the divisor class $[nE-lD]\in J(\R)$ lies in the identity
connected component $J(\R)_0$ of the compact real Lie group $J(\R)$.
Fix an arbitrary $\R$-point $P_0$ in the interior $\interior(K)$ of
$K$ relative to $C(\R)$. By the argument in \cite{sch:tams}, 2.11 and
2.12, there is an integer $k\ge1$ such that, for every $\alpha\in
J(\R)_0$, there exist $2k$ points $P_1,\dots,P_{2k}\in\interior(K)$
with
$$\alpha\>=\>\sum_{j=1}^{2k}[P_j-P_0].$$
Applying this to the divisor class $\alpha:=[nE-lD-k(2P_0-E)]$ (which
lies in $J(\R)_0$, c.f.\ \cite{sch:tams} Lemma 2.6),
we conclude that there exist $P_1,\dots,P_{2k}\in\interior(K)$ such
that
$$lD+\sum_{j=1}^{2k}P_j\>\sim\>(n+k)E$$
on $\wt C$.
Since $\supp(E)$ is disjoint to $C_1$, there exists $0\ne
h\in\R[C_1]$ such that the divisor of $h$ on $C_1$ is $lD+\sum_{j=1}
^{2k}P_j$.
Since $\ord_{Q_i}(h^2f)\ge2lm_i+\ord_{Q_i}(f)\ge0$, we see that
$h^2f$ lies in $\R[C_1]$ as well. Moreover, every zero of $h$ on $C$
is real and is an interior point of $K$.
In addition, we can ensure that $h$ has no common zero with any of
$h_0,\dots,h_r$.

Since $f\ge0$ on $K$, and since $K$ is dense in $K_1$, it follows
that $h^2f\ge0$ on $K_1$. So $h^2f\in T_1$, which means that there is
an identity
$$h^2f\>=\>\sum_{i=0}^r\sum_jp_{ij}^2h_i$$
with suitable $p_{ij}\in\R[C_1]$. Since any zero of $h$ on $C$ is real
and is an interior point of $K$, it follows that each summand
$p_{ij}^2h_i$ of the right hand sum is divisible (inside $\R[C]$) by
$h^2$, see \cite{sch:tams} Lemma 0.1. By the choice of $h$, none of
the $h_i$ vanishes in any of the zeros of $h$. Hence we even have
$h\mid p_{ij}$ inside $\R[C]$, for all indices $i,\,j$. Dividing we
conclude that $f$ lies in the quadratic module generated by $h_0,
\dots,h_r$ in $\R[C]$.

We have thus proved that $T=\scrP_C(K)$ is finitely generated in
$\R[C]$. To prove that $T$ is stable is equivalent to proving the
following assertion (c.f.\ \cite{sch:stab} Corollary 3.8): Let $R$
be any real closed extension field of $\R$. Then the preordering
$T_R$ generated by $T$ in $R[C]$ is saturated.

To prove this, let $0\ne f\in R[C]$ be nonnegative on $K_R$, where
$K_R$ denotes the extension of the \sa\ set $K\subset C(\R)$ to a
\sa\ subset of $C(R)$. Arguing literally as in the first part of the
proof, we find $h\in\R[C_1]$ (sic) such that $h^2f\in R[C_1]$, and
such that any zero of $h$ on $C$ is real and is an interior point of
$K$ in which $h_0\cdots h_r$ does not vanish. Completing the argument
exactly as before, we see that $f$ lies in the quadratic module of
$R[C]$ generated by $h_0,\dots,h_r$. In other words, $f\in T_R$, as
desired. The theorem is proved.
\end{proof}


\end{document}